\documentclass[a4paper]{article}

\usepackage{bold-extra}
\usepackage{arxiv}
\usepackage[english]{babel}
\usepackage{caption}
\usepackage{lscape} 
\usepackage{amsmath}
\usepackage[margin=2cm]{geometry}
\usepackage{amsthm}
\usepackage{adjustbox}
\usepackage{amsfonts}
\usepackage{mathtools}
\usepackage{verbatim}
\usepackage{algpseudocode,algorithm,algorithmicx}
\usepackage{mathtools}
\usepackage{comment}
\usepackage{verbatim, booktabs, dcolumn, setspace}

\newtheorem{theorem}{Theorem}

\newtheorem{lemma}{Lemma}

\newtheorem{remark}{Remark}
\usepackage{graphicx}
\usepackage{wrapfig}
\usepackage{caption}
\usepackage{mathtools}
\usepackage{float}

\allowdisplaybreaks

\usepackage{tikz}
\usetikzlibrary{matrix, positioning, arrows.meta, decorations.pathreplacing}

\usepackage[english,noautotitles-r]{SASnRdisplay} 
\usepackage[round]{natbib}
\usepackage{url}
\usepackage{subfigure}
\lstdefinestyle{r-output}{
style = r-style,
style = r-output-user,
}

\usepackage{url}            
\usepackage{booktabs}       
\usepackage{amsfonts}       
\usepackage{nicefrac}       
\usepackage{microtype}      
\usepackage{graphicx}
\usepackage{natbib}
\usepackage{doi}

\newcommand{\stirlingii}{\genfrac{\{}{\}}{0pt}{}}

\title{Extending an Order-Statistic Characterization of the Exponential Distribution}

\author{ \href{https://orcid.org/0000-0002-0460-400X}{\includegraphics[scale=0.5]{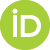}\hspace{1mm} Danijel G.    Aleksi\' c} \\
	University of Belgrade\\
	\scalebox{0.9}[1.0]{Faculty of Organizational Sciences}\\
	Belgrade, 11000, Serbia \\
	\texttt{\scalebox{0.9}[1.0]{danijel.aleksic@fon.bg.ac.rs}}\\
    \And 
    \href{https://orcid.org/0000-0002-9088-879X}{\includegraphics[scale=0.5]{OrcidLogo.eps}\hspace{1mm}Dušan Džamić} \\
	University of Belgrade\\
        \scalebox{0.9}[1.0]{Faculty of Organizational Sciences}\\
	Belgrade, 11000, Serbia \\
	\texttt{\scalebox{0.85}[1.0]{dusan.dzamic@fon.bg.ac.rs}} \\
    \And
    \href{https://orcid.org/0000-0001-8243-9794}{\includegraphics[scale=0.5]{OrcidLogo.eps}\hspace{1mm}Bojana Milo\v sevi\' c} \\
	University of Belgrade\\
        Faculty of Mathematics\\
	Belgrade, 11000, Serbia \\
	\texttt{bojana@matf.bg.ac.rs} \\
	\And \And
	\href{https://orcid.org/0000-0002-6826-3232}{\includegraphics[scale=0.5]{OrcidLogo.eps}\hspace{1mm}Marko Obradović} \\
	University of Belgrade\\
        Faculty of Mathematics\\
	Belgrade, 11000, Serbia \\
	\texttt{marcone@matf.bg.ac.rs} \\
}

\date{}

\begin{document}	
\maketitle

\begin{abstract}
We prove a novel characterization of the exponential distribution via a linear combination of order statistics. The theorem established herein generalizes the previous characterization theorem given in \cite{milovsevic2016some}. The proof relies on properties of Stirling numbers of the second kind, which are derived for this purpose and may be of independent interest.
    
    \vspace{10pt} \small
    \textbf{Keywords:} Exponential distribution; Characterization theorem; Stirling numbers of the second kind; Combinatorial identities; Generating functions.

    \textbf{MSC:} 
 Primary 62E10; Secondary 62G30, 11B73, 05A15.
\end{abstract}

\section{Introduction}\label{sec:introduction}


The exponential distribution is the distribution for which the largest number of characterization theorems exist. Among the plethora of results proposed and proved by researchers, the memoryless property and the constancy of the hazard rate are undoubtedly the most famous.

A prominent class of characterizations relies on linear combinations of order statistics. These characterizations stem from the well-known identity
\begin{align}\label{identitet}
X_{(k;n)}\overset{d}{=}\sum_{j=n-k+1}^n \frac{X_j}{j}, \qquad 1\leq k\leq n,
\end{align}
which is known to hold for the exponential distribution \cite[see e.g.][p.~73]{arnold2008first}. \cite{ahsanullah1972characterization} proved that if \eqref{identitet} holds for all $k$, then it characterizes the exponential distribution. Later, \cite{huang1974characterizations} weakened this condition by showing that it suffices for the identity to hold for two consecutive values of $k$, and that a single value of $k$ is not sufficient in general.

Nevertheless, by imposing additional conditions on the distribution of the underlying i.i.d.\ sample, it becomes possible to obtain a characterization based on \eqref{identitet} holding for a single $k$.

\cite{arnold2013exponential} proposed a proof method that assumes the density is analytic for all $x>0$. Under this assumption, they established that the identity for the particular case $k=n=2$ characterizes the exponential distribution. They further conjectured that, under the same analyticity condition, the identity for any $k=n$ would also yield a characterization.

This work inspired several subsequent characterizations that employ the same proof technique. The identity \eqref{identitet} is rather flexible: various distributional equalities can be obtained when a part of the sum on the right hand side is replaced by the corresponding order statistic. For example,
for the  exponential distribution the following identity also holds  
\begin{align*}
    X_{(k;n)}\overset{d}{=}X_{(k-1;n-1)}+\frac{X_n}{n}.
\end{align*}
\cite{yanev2013characterizations} and \cite{obradovic2015three} studied some such particular cases for small $n$. The case $k=n$ for arbitrary $n$ was investigated by \cite{chakraborty2013characterization} and \cite{yanev2016characterization}, while \cite{milovsevic2016some} proved various characterizations for fixed but arbitrary $k$ and $n$, including one that directly matches the form of \eqref{identitet}.

A notable methodological innovation introduced in these later works is the use of Stirling numbers of the second kind in the proofs.

It is also worth mentioning that the conjecture stated by \cite{arnold2013exponential} was resolved as a corollary of the results obtained independently by both \cite{milovsevic2016some} and \cite{yanev2016characterization}.
Related characterizations were also considered in \cite{chakraborty2018characterizations}.

Here  we  prove   the  following theorem.


\begin{theorem}\label{main_theorem}
Let $\mathcal{F}$ denote the family of continuous cumulative distribution functions $F$ satisfying $F(0) = 0$, where the corresponding probability density function $f$, {such that $f(0)>0$,} admits a Maclaurin series expansion for every $x > 0$. Let $X_1, X_2, \dots, X_n$ be a random sample from the distribution $F$ that belongs to $\mathcal{F}$. Let $j$ and $k$ be fixed numbers such that {$1 \leq j < k < (n+j)/2 < n$} and let $X_{01}, X_{02}, ..., X_{0j}$ be independent random variables, each of them independent from sample $X_1, \dots, X_n$ and each from the distribution $F$. If
\[ X_{(k-j;n)} + \sum_{i=1}^j \frac{1}{n-k+i}X_{0i} \overset{d}{=} X_{(k;n)}, \]
then $F \sim \mathcal{E}(\lambda)$, for some $\lambda > 0$.
\end{theorem}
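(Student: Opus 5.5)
The plan follows the Arnold--Villaseñor technique as refined in \cite{milovsevic2016some}: we write the distributional identity as an equality of densities, expand everything around zero, and show that the Maclaurin coefficients of $f$ are forced to be those of an exponential density.

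\textbf{Step 1: the density equation.} Write $\bar F=1-F$. The density of $X_{(m;n)}$ is
\[
f_{m;n}(x)=m\binom{n}{m}F(x)^{m-1}\bar F(x)^{n-m}f(x).
\]
The left-hand side is $X_{(k-j;n)}$ plus an independent sum of $j$ scaled copies of $X$. The cleanest route is Laplace transforms: the identity becomes
\[
\mathcal{L}[f_{k-j;n}](s)\,\prod_{i=1}^{j}\varphi\!\left(\frac{s}{n-k+i}\right)=\mathcal{L}[f_{k;n}](s),
\]
where $\varphi$ is the Laplace transform of $f$. Alternatively, we can work directly with the convolution of $f_{k-j;n}$ with the densities $(n-k+i)f((n-k+i)x)$, which is what the power-series comparison below uses.

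\textbf{Step 2: power series and induction.} Set $f(x)=\sum_{r\ge 0}a_r x^r$ with $a_0=f(0)>0$. Normalize by scaling so that $a_0=\lambda$; without loss of generality $\lambda=1$, since the identity is scale invariant. Then
\[
F(x)=\sum_r \frac{a_r}{r+1}x^{r+1},
\]
and both sides become power series in $x$. Convolving monomials is explicit: $x^p * x^q = \frac{p!\,q!}{(p+q+1)!}x^{p+q+1}$. The order of vanishing is the same on both sides, namely $x^{k-1}$. Indeed, $f_{k-j;n}$ vanishes to order $k-j-1$, and each of the $j$ convolutions raises the order by one.

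The main claim is by strong induction on $r$: if $a_s=(-1)^s/s!$ for all $s<r$, then comparing the coefficient of $x^{k-1+r}$ on both sides gives a linear equation in $a_r$ of the form
\[
C_{n,k,j}(r)\,\bigl(a_r-(-1)^r/r!\bigr)=0.
\]
The remaining terms cancel because the exponential distribution satisfies the identity exactly (equation \eqref{identitet} together with the Markov/Rényi representation). So we need to extract the coefficient of $a_r$ on each side. On the right it comes from $F^{k-1}\bar F^{n-k}f$: from the factor $f$, from the terms in $F^{k-1}$ where one factor of $F$ contributes its $(r+1)$-st term, and from $\bar F^{n-k}$ at higher order. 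The left side is analogous, with the convolution weights applied, and includes the contributions where $a_r$ appears inside one of the $j$ convolved densities. Writing these contributions at the exponential point, expressions like $(1-e^{-x})^{m}$ appear. Their expansions are governed by Stirling numbers of the second kind via
\[
(e^{x}-1)^m=m!\sum_{N}\stirlingii{N}{m}\frac{x^N}{N!},
\]
which is where the Stirling identities of the paper enter.

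\textbf{Step 3: the obstacle.} The crux is proving that $C_{n,k,j}(r)\neq 0$ for every $r\ge1$. This coefficient is a signed combination of Stirling numbers of the second kind (with weights $(n-k+i)^{-r}$ from the scaled convolutions). I would first reduce it to a closed form using the generating function $\sum_N \stirlingii{N}{m}x^N=\prod_{i=1}^m (1-ix)^{-1}$. Then I would show that it has a definite sign by comparing the dominant term with the rest. The hypothesis $k<(n+j)/2$, i.e.\ $n-k>k-j$, should be exactly what makes the contribution of the $j$ convolution factors $\prod_i (n-k+i)^{-1}$ dominate the competing term coming from the order statistic. Small $r$, especially $r=1$, will likely need a separate direct check. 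Once nonvanishing is established, induction gives $f(x)=e^{-x}$ on a neighbourhood of $0$. By analyticity on $(0,\infty)$ this extends to all $x>0$, so $F\sim\mathcal{E}(\lambda)$.
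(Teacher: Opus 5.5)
Your Step~2 is correct, and it is shorter than the paper's route. For $r\ge1$ the unknown $a_r$ enters the coefficient of $x^{k-1+r}$ only linearly and only multiplied by leading-order terms. Also, $e^{-x}$ satisfies the identity with the same $a_0,\dots,a_{r-1}$. So the equation does reduce to $C(r)\bigl(a_r-(-1)^r/r!\bigr)=0$. This makes the paper's long verification that $C_1=C_2$ (Lemma~\ref{lema_orig_teska} and the Stirling-number lemmas behind it) unnecessary. The genuine gap is Step~3: you call it the crux but leave it as a heuristic. Nonvanishing of $C(r)$ is the only place where $k<(n+j)/2$ is used, and it is not automatic. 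At $r=1$ the coefficient is proportional to $j(n-2k+j)$, which vanishes when $j=2k-n$.

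Your description of $C(r)$ is also wrong, so the route you propose (Stirling generating functions, weights $(n-k+i)^{-r}$) would not get there. Because $a_r$ multiplies only leading terms, $C(r)$ contains no Stirling numbers; those belong to the part you already removed by comparing with the exponential. The convolution weights enter with positive powers, since $(n-k+u)f((n-k+u)x)$ has $x^r$-coefficient $(n-k+u)^{r+1}a_r$. Collect the contributions you listed: $f$ and one factor $F$ in $f_{k-j;n}$ against the leading term $\Pi_j x^{j-1}/(j-1)!$ of $D_j$; one scaled density against the leading term of $f_{k-j;n}$; and $f$ and one factor $F$ on the right. With $a_0=1$ and up to the positive factor $n!\,r!/\bigl((n-k)!\,(k-1+r)!\bigr)$, this gives
\[
C(r)\propto\binom{k-j+r}{r+1}+\sum_{u=1}^{j}(n-k+u)^{r}-\binom{k+r}{r+1}
=\sum_{u=1}^{j}\left[(n-k+u)^{r}-\binom{k-j+r+u-1}{r}\right],
\]
where the equality uses the hockey-stick identity. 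This is exactly the paper's $C_1$ in Lemma~\ref{lem:C1_positive}, whose $r+1$ is your $r$. Positivity then holds term by term. Put $b=k-j+u+1\ge3$; then $n-k\ge k-j+1$ gives $(n-k+u)^r\ge b^r$. Meanwhile $\binom{k-j+r+u-1}{r}=\binom{b+r-2}{r}=\frac{(b-1)b\cdots(b+r-2)}{r!}<b^r$, because $b+t\le(t+1)b$. This is the precise form of your ``dominance'' heuristic, and it covers $r=1$ without a separate check.
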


Its proof is given in the next section. In addition, we prove a combinatorial identity  featuring Stirling numbers of the second kind  that  might be useful in its own right.  The last section  contains an overview  of potential applications.

\section{Proof of Theorem \ref{main_theorem}}

Before establishing our main result, we present a series of auxiliary results which are known in the literature and are restated here for readers' convenience. The subsection ~\ref{subsec:auxiliary_lemmas}  contains  a series of new technical algebraic and combinatorial identities. Finally, the  proof is finalized  in subsection \ref{main_statement}.

\subsection{Preliminaries}\label{sec:preliminaries}

 The first two lemmas review standard properties and explicit representations of the Stirling numbers of the second kind. The remaining lemmas provide specific combinatorial identities and analytical characterizations of distribution functions required for the subsequent proofs.

\begin{lemma}[e.g. \cite{gross2007combinatorial}, Ch. 5]\label{stirling}
For Stirling number of the second kind it holds

\begin{equation}\label{stirling_1}
\stirlingii{a}{b} = \stirlingii{a-1}{b-1} + b \stirlingii{a-1}{b},
\end{equation}

\begin{equation}\label{stirling_2}
\stirlingii{a+1}{b+1} = \sum_{l=0}^{a}\binom{a}{l} \stirlingii{l}{b}, 
\end{equation}



\end{lemma}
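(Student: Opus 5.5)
Both identities are standard facts about set partitions. We prove them by counting partitions of a finite set according to the block that contains a distinguished element. Recall that $\stirlingii{a}{b}$ counts the partitions of an $a$-element set into exactly $b$ nonempty blocks. We use the conventions $\stirlingii{0}{0}=1$ and $\stirlingii{a}{0}=0$ for $a\ge 1$, and $\stirlingii{a}{b}=0$ whenever $b>a$. With these conventions, all boundary terms in the sums below are consistent with the combinatorial interpretation.

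\textbf{Recurrence \eqref{stirling_1}.} Take $a\ge 1$ and fix the element $a$ of $\{1,\dots,a\}$. We split the partitions into $b$ blocks into two classes.
\begin{itemize}
\item In the first class, $\{a\}$ is a singleton block. Removing it leaves an arbitrary partition of $\{1,\dots,a-1\}$ into $b-1$ blocks, so this class contributes $\stirlingii{a-1}{b-1}$.
\item In the second class, $a$ shares its block with other elements. Deleting $a$ gives a partition of $\{1,\dots,a-1\}$ into exactly $b$ blocks. Conversely, each such partition arises from exactly $b$ partitions of this class, one for each block into which $a$ can be reinserted. This class therefore contributes $b\stirlingii{a-1}{b}$.
\end{itemize}
Adding the two counts gives \eqref{stirling_1}.

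\textbf{Identity \eqref{stirling_2}.} Consider partitions of $\{1,\dots,a+1\}$ into $b+1$ blocks, and look at the block containing the element $a+1$. Let $l$ be the number of elements of $\{1,\dots,a\}$ that do \emph{not} lie in this block, so $0\le l\le a$.
\begin{itemize}
\item Choosing which $l$ elements lie outside the block can be done in $\binom{a}{l}$ ways. This choice determines the block of $a+1$ completely.
\item The remaining $l$ elements must then be partitioned into exactly $b$ blocks, which can be done in $\stirlingii{l}{b}$ ways.
\end{itemize}
The resulting map is a bijection between the partitions being counted and the pairs (choice of subset, partition of that subset). Summing over $l$ gives
\[
\stirlingii{a+1}{b+1}=\sum_{l=0}^{a}\binom{a}{l}\stirlingii{l}{b}.
\]

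\textbf{Conventions at the boundary.} The terms with $l<b$ vanish, as they must. When $b=0$, only the term $l=0$ survives and the identity reads $\stirlingii{a+1}{1}=1$, which is correct. The only point that needs care is checking that these conventions make the edge cases of both identities hold. There is no real obstacle here. As an alternative, \eqref{stirling_2} also follows algebraically from the explicit formula $\stirlingii{a}{b}=\frac{1}{b!}\sum_{i}(-1)^{b-i}\binom{b}{i}i^{a}$ via the binomial theorem, but the counting argument is cleaner.
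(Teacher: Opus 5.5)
Your proof is correct: the decomposition according to whether $a$ is a singleton proves \eqref{stirling_1}, and the bijection built from the block containing $a+1$ proves \eqref{stirling_2}. The only convention you leave unstated is $\stirlingii{a-1}{-1}=0$, needed for the case $b=0$ of \eqref{stirling_1}, and the paper never uses that case. The paper gives no proof of its own and only cites the lemma as a standard textbook fact; your double-counting arguments are the usual proofs of it.
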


\begin{lemma}[e.g. \cite{Charalambides2002}, Th. 8.6]
For Stirling numbers of the second kind it holds that
    \begin{align}\label{stirling_multinom}
\stirlingii{n}{k} = \frac{n!}{k!} \sum_{\substack{{r_1, \dots, r_k \geq 1} \\r_1 + r_2 + \cdots + r_k = n}} \frac{1}{r_1! r_2! \cdots r_k!}.
\end{align}
\end{lemma}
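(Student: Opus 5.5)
The plan is to prove \eqref{stirling_multinom} by double counting the surjections from an $n$-element set onto a $k$-element set. Throughout I use the combinatorial definition of $\stirlingii{n}{k}$ as the number of partitions of $[n]=\{1,\dots,n\}$ into $k$ nonempty unlabeled blocks. The recurrence \eqref{stirling_1} is consistent with this definition, but it is not needed here.

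\textbf{Step 1: surjections via partitions.} Let $\mathcal S$ be the set of surjections $\varphi:[n]\to[k]$. Such a $\varphi$ determines the partition of $[n]$ into the fibres $\varphi^{-1}(1),\dots,\varphi^{-1}(k)$. These fibres are nonempty because $\varphi$ is onto. Conversely, a partition into $k$ nonempty blocks together with a bijection from the blocks to $[k]$ determines $\varphi$ uniquely. Hence
\[
|\mathcal S| = k!\,\stirlingii{n}{k}.
\]

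\textbf{Step 2: surjections by fibre sizes.} Classify each $\varphi\in\mathcal S$ by its fibre-size vector $(r_1,\dots,r_k)$, where $r_i=|\varphi^{-1}(i)|$. Then every $r_i\ge 1$ and $r_1+\cdots+r_k=n$. Conversely, every such composition of $n$ arises. For a fixed composition, the number of maps with these fibre sizes is the multinomial coefficient $\frac{n!}{r_1!\cdots r_k!}$: choose which $r_1$ elements go to $1$, then which $r_2$ of the remaining elements go to $2$, and so on. Summing over compositions gives
\[
|\mathcal S| = \sum_{\substack{r_1,\dots,r_k\ge 1\\ r_1+\cdots+r_k=n}} \frac{n!}{r_1!\cdots r_k!}.
\]

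\textbf{Step 3: conclude.} Equating the two expressions for $|\mathcal S|$ and dividing by $k!$ yields \eqref{stirling_multinom}.

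As a cross-check, there is an alternative generating-function route. It uses the exponential generating function $\sum_{n}\stirlingii{n}{k}\frac{x^n}{n!}=\frac{(e^x-1)^k}{k!}$. Expanding $(e^x-1)^k=\bigl(\sum_{r\ge1}x^r/r!\bigr)^k$ and extracting the coefficient of $x^n$ gives the same sum. This route, however, requires first establishing that generating function, for example from \eqref{stirling_1}.

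There is no real obstacle. The only point needing care is the bijection in Step 1: distinct labelings of the same partition give distinct surjections, so each partition contributes exactly $k!$ surjections and there is no overcounting.
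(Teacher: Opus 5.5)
Your double-counting proof is correct and complete. Both sides count the surjections $[n]\to[k]$: once as a set partition into $k$ nonempty blocks together with one of the $k!$ labellings of its blocks, and once by fibre-size composition $(r_1,\dots,r_k)$ weighted by the multinomial coefficient $n!/(r_1!\cdots r_k!)$.

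The paper gives no proof of its own to compare against. It states the identity as a known preliminary and refers to Charalambides (Th.~8.6), so your argument supplies a self-contained justification of a cited fact.

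Compared with the exponential-generating-function route you mention as a cross-check, the surjection count is the more elementary choice. It uses nothing beyond the set-partition definition of $\stirlingii{n}{k}$, whereas the other route first needs $\sum_n \stirlingii{n}{k}x^n/n!=(e^x-1)^k/k!$.

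One point is worth stating explicitly: your argument also covers the degenerate case $k=0$, $n\ge 1$. There are no maps $[n]\to\emptyset$, no compositions of $n$ into zero positive parts, and $\stirlingii{n}{0}=0$. This matters because the paper later applies the identity with $k-j-1$ and $k-2$ in place of $k$, and these are zero when $k=j+1$ or $k=2$.
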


\begin{lemma}[\cite{milovsevic2016some}, Lemma 4
]\label{lema_induktivna_suma}
For integers $k, n, r$ such that $1\leq k \leq n$ and $r \geq 0$ it holds

\[ \sum_{\substack{j_1, \dots , j_k \geq 0 \\ j_1 + \cdots j_k = r+1}} n^{j_1}(n-1)^{j_2} \cdots (n-k+1)^{j_k} = \sum_{i=0}^{r+1} \binom{n-k}{i} \frac{(i+k-1)!}{(k-1)!} \stirlingii{k+r+1}{i+k}. \]
\end{lemma}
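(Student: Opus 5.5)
The plan is to recognise the left-hand side as a complete homogeneous symmetric polynomial and evaluate it by counting set partitions. Put $m=r+1$ and $c=n-k\geq 0$. Reindexing the summation variables (the sum is symmetric in them), the left-hand side equals
\[
h_m(c+1,c+2,\dots,c+k)=\sum_{\substack{j_1,\dots,j_k\geq 0\\ j_1+\cdots+j_k=m}}\prod_{l=1}^{k}(c+l)^{j_l},
\]
the complete homogeneous symmetric polynomial of degree $m$ in $c+1,\dots,c+k$. For $c=0$ the classical identity $h_m(1,\dots,k)=\stirlingii{m+k}{k}$ is exactly the $i=0$ term of the right-hand side. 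So I would give one bijective argument that covers every $c$ at once.

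\textbf{Insertion model.} I would read $h_m(c+1,\dots,c+k)$ as a count of sequential constructions on the ground set $\{1,\dots,m+k\}$. We have $k$ ordinary blocks and $c$ labelled special bins $B_1,\dots,B_c$, all initially empty. The elements are processed in increasing order, and element $1$ always opens the first ordinary block. Each later element either opens a new ordinary block, which is allowed exactly $k-1$ times in total, or is placed into an already existing place. When $l$ ordinary blocks are open, there are $c+l$ such places: the $l$ open ordinary blocks and the $c$ special bins. Let $j_l$ be the number of elements placed while exactly $l$ ordinary blocks are open. Then $j_1+\cdots+j_k=m$, the positions of the opening events are determined by $(j_1,\dots,j_k)$, and the number of constructions with this profile is $\prod_l (c+l)^{j_l}$. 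Summing over profiles gives $h_m(c+1,\dots,c+k)$. For $c=0$ this is the usual proof of $h_m(1,\dots,k)=\stirlingii{m+k}{k}$, and it is a useful sanity check.

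\textbf{Counting the outcomes.} Each construction produces a pair consisting of a partition $\pi$ of $\{1,\dots,m+k\}$ and a labelling. The blocks of $\pi$ are the $k$ ordinary blocks together with the $i$ special bins that end up nonempty. The labelling assigns to each of those $i$ nonempty blocks a distinct label from $\{1,\dots,c\}$, and every other block of $\pi$ is declared ordinary. Conversely, take any partition of $\{1,\dots,m+k\}$ into $i+k$ blocks, and choose $i$ blocks not containing element $1$ with an injective labelling by special labels. Scanning the elements in increasing order recovers a unique construction: the minimum of each ordinary block is an opening event, and every other element is a placement. The key check here is that a special block's minimum is a legitimate placement, which holds because special bins are available at every step. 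Hence the constructions are in bijection with such labelled partitions.

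For fixed $i$, the number of labelled partitions is obtained in three steps. There are $\stirlingii{m+k}{i+k}$ partitions into $i+k$ blocks. There are $\binom{c}{i}$ ways to choose the set of labels used. Finally, the $i$ chosen labels are assigned injectively to the $i+k-1$ blocks not containing $1$ in $(i+k-1)(i+k-2)\cdots k=\frac{(i+k-1)!}{(k-1)!}$ ways. Summing over $i$ gives
\[
h_m(c+1,\dots,c+k)=\sum_{i\ge 0}\binom{n-k}{i}\frac{(i+k-1)!}{(k-1)!}\stirlingii{k+r+1}{i+k}.
\]
Since $\stirlingii{k+r+1}{i+k}=0$ for $i>r+1$, the sum stops at $i=r+1$, which is the stated identity.

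\textbf{Main obstacle.} The step I expect to need the most care is the bijection itself, specifically the two constraints it must respect. First, element $1$ must open an ordinary block: this is why the weights start at $c+1$ rather than $c$, and why the block of $1$ is excluded from the labelling. Second, exactly $k$ ordinary blocks must be opened. If the combinatorial argument proves awkward to write cleanly, my fallback is an induction on $c$. The shift relation $h_m(c+1,\dots,c+k)=\sum_{t}h_t(c,\dots,c+k-1)\cdot(\ldots)$ does not directly give this, so the induction would instead use $h_m(x_0,\dots,x_k)=h_m(x_1,\dots,x_k)+x_0h_{m-1}(x_0,\dots,x_k)$ together with the recurrences \eqref{stirling_1} and \eqref{stirling_2} of Lemma \ref{stirling}.
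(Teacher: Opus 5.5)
Your bijective proof is correct and complete. After the reversal $j_l\mapsto j_{k+1-l}$ the left-hand side is $h_{r+1}(n-k+1,\dots,n)$, and your insertion model counts exactly this. The inverse map via block minima is well defined because a special bin may receive an element while it is still empty. The $i$ labels can be attached injectively to the $i+k-1$ blocks avoiding $1$ in $\binom{n-k}{i}\,k(k+1)\cdots(k+i-1)$ ways.

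This is a genuinely different route from the paper's. The paper does not prove the lemma at all; it is quoted from \cite{milovsevic2016some}. Its own machinery does, however, give an analytic proof. The right-hand side is precisely $A_k^n(r)$ from Lemma~\ref{lema_pomocna_dzamic}. Its generating function is obtained from $\sum_a\stirlingii{a}{b}x^a=x^b/\prod_{q=1}^{b}(1-qx)$ together with the rational identity of Lemma~\ref{lema_pomocna_za_pomocnu_dzamic}, which is proved by induction on $N-m$ using Pascal's rule and the substitution $y=x/(1-x)$. The result is $\prod_{q=n-k+1}^{n}(1-qx)^{-1}$, whose coefficient of $x^{r+1}$ is visibly your $h_{r+1}$.

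The two routes buy different things. Your argument explains each factor of the formula and needs no algebraic manipulation:
the weights start at $n-k+1$ because element $1$ must open an ordinary block;
$\binom{n-k}{i}$ counts the set of labels used;
the rising factorial counts where those labels go.
The generating-function route is more mechanical, but it delivers the whole power-series identity at once. That is what the paper then uses in Lemma~\ref{lema_orig_teska} through a Cauchy product. Your interpretation recovers that step too: splitting $\{n-k+1,\dots,n\}$ into $\{n-k+1,\dots,n-k+j\}$ and $\{n-k+j+1,\dots,n\}$ writes $h_{r+1}$ of the union as a convolution of the $h$'s of the two parts.

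The fallback induction in your last paragraph is too vague to stand as a proof, but it is not needed.
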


\begin{lemma}[\cite{milovsevic2016some}, Lemma 5]\label{lema_q}
Let $F$ be a distribution function that belongs to $ \mathcal{F}$. If for all natural $q$ holds
\begin{equation}\label{uslov_q}
f^{(q)}(0) = (-1)^qf^{q+1}(0),
\end{equation}
then $f(x) = \lambda e^{-\lambda x}$ for some $ \lambda > 0$.
\end{lemma}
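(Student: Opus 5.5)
The plan is to take the hypothesis literally as a statement about Maclaurin coefficients. Since $F \in \mathcal{F}$, the density $f$ admits a Maclaurin series expansion valid for every $x>0$:
\[
f(x) = \sum_{q=0}^{\infty} \frac{f^{(q)}(0)}{q!}\, x^q, \qquad x>0 .
\]
Here the derivatives at $0$ are understood as the one-sided derivatives from the right, which is what the membership of $F$ in $\mathcal{F}$ guarantees. The idea is to substitute the prescribed values of $f^{(q)}(0)$ and recognise the resulting series.

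Put $\lambda := f(0)$. By the definition of $\mathcal{F}$ we have $f(0)>0$, so $\lambda>0$. The hypothesis \eqref{uslov_q} states, for every natural $q$,
\[
f^{(q)}(0) = (-1)^q f^{q+1}(0) = (-1)^q \lambda^{q+1}.
\]
This also holds trivially for $q=0$, since both sides then equal $\lambda$. In particular, the power $f^{q+1}(0)$ means $\bigl(f(0)\bigr)^{q+1}$, not a derivative. Plugging these values into the expansion gives, for all $x>0$,
\[
f(x) = \lambda \sum_{q=0}^{\infty} \frac{(-\lambda x)^q}{q!} = \lambda e^{-\lambda x}.
\]
The series converges for every real $x$, so there is no issue about the radius of convergence. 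Since $F(0)=0$, the density vanishes on $(-\infty,0)$. Hence $f$ is exactly the $\mathcal{E}(\lambda)$ density, and it is automatically normalised because $\int_0^\infty \lambda e^{-\lambda x}\,dx = 1$.

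There is no real obstacle here. The only points needing care are the following:
\begin{itemize}
\item reading $f^{q+1}(0)$ as a power rather than a derivative;
\item observing that positivity of $\lambda$ comes from the assumption $f(0)>0$ built into $\mathcal{F}$;
\item noting that the hypothesis "Maclaurin series expansion for every $x>0$" is what lets the Taylor coefficients at $0$ determine $f$ on the whole half-line $(0,\infty)$. Without this global analyticity assumption the conclusion would fail, since for instance a function could be flat-perturbed away from $0$.
\end{itemize}
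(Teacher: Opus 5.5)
Your argument is correct: substituting $f^{(q)}(0)=(-1)^q\bigl(f(0)\bigr)^{q+1}$ into the Maclaurin expansion guaranteed by membership in $\mathcal{F}$ gives $f(x)=f(0)e^{-f(0)x}$ on $(0,\infty)$, with $\lambda=f(0)>0$. The paper does not reprove this lemma; it cites Lemma 5 of \cite{milovsevic2016some}, which as far as I recall is proved by this same substitution of the prescribed derivatives into the Maclaurin series, so your route should coincide with the intended one.
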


\begin{lemma}[\cite{milovsevic2016some}, Lemma 6]\label{A_m_izraz}
Let $F$ be a distribution function that belong to the class $\mathcal{F}$. Denote $A_m(x) = F^{m}(x)f(x)$. If the condition \eqref{uslov_q} is satisfied for all $0 \leq q \leq r-m$, $r > m$, then
\begin{equation}\label{Am_izraz}
A_m^{(r)}(0) = (-1)^{r-m}f^{r+1}(0)\stirlingii{r+1}{m+1}m!,
\end{equation}
where $\stirlingii{a}{b}$ denotes the Stirling number of the second kind, i.e. number of ways to partition a set of $a$ elements into $b$ non-empty subsets.
\end{lemma}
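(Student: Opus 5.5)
We need to prove Lemma A_m_izraz: if $f^{(q)}(0)=(-1)^q f^{q+1}(0)$ for all $0\le q\le r-m$, then
\[
A_m^{(r)}(0)=(-1)^{r-m}f^{r+1}(0)\stirlingii{r+1}{m+1}m!,
\qquad A_m=F^m f .
\]

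The plan is a direct computation with the general Leibniz rule, followed by the multinomial representation \eqref{stirling_multinom}. Write $A_m$ as a product of $m+1$ factors, namely $m$ copies of $F$ and one copy of $f$. Then
\[
A_m^{(r)}(0)=\sum_{\substack{s_1,\dots,s_{m+1}\ge 0\\ s_1+\cdots+s_{m+1}=r}}\frac{r!}{s_1!\cdots s_{m+1}!}\,F^{(s_1)}(0)\cdots F^{(s_m)}(0)\,f^{(s_{m+1})}(0).
\]
Since $F(0)=0$, every term with some $s_i=0$ for $i\le m$ vanishes. In the surviving terms, $F^{(s_i)}(0)=f^{(s_i-1)}(0)$ with $s_i\ge1$.

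Next, check that the hypothesis covers every derivative that appears. Each $s_i\ge1$ for $i\le m$, so $s_{m+1}\le r-m$ and each $s_i-1\le r-m$. The indices $q$ that occur are therefore all in $[0,r-m]$, which is exactly the range assumed in \eqref{uslov_q}. For $q=0$ the condition reads $f(0)=f(0)$, so it holds trivially. Substituting gives each surviving term as
\[
(-1)^{\sum_{i\le m}(s_i-1)+s_{m+1}}\,f^{\sum_{i\le m}s_i+s_{m+1}+1}(0)=(-1)^{r-m}f^{r+1}(0).
\]
Hence
\[
A_m^{(r)}(0)=(-1)^{r-m}f^{r+1}(0)\sum_{\substack{s_1,\dots,s_m\ge1,\ s_{m+1}\ge0\\ \sum s_i=r}}\frac{r!}{s_1!\cdots s_{m+1}!}.
\]

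It remains to identify this multinomial sum as $m!\stirlingii{r+1}{m+1}$. This combinatorial step is the main point. Substitute $t_{m+1}=s_{m+1}+1\ge1$ and $t_i=s_i$ for $i\le m$, so that $t_1+\cdots+t_{m+1}=r+1$ with all $t_i\ge1$. The summand becomes $\frac{r!\,t_{m+1}}{t_1!\cdots t_{m+1}!}$.

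By symmetry of the summation set under permuting the $t_i$, the factor $t_{m+1}$ may be replaced by the average $\frac{1}{m+1}\sum_i t_i=\frac{r+1}{m+1}$. The sum therefore equals
\[
\frac{(r+1)!}{m+1}\sum\frac{1}{t_1!\cdots t_{m+1}!}.
\]
By \eqref{stirling_multinom} with $n=r+1$ and $k=m+1$, this is
\[
\frac{(r+1)!}{m+1}\cdot\frac{(m+1)!}{(r+1)!}\stirlingii{r+1}{m+1}=m!\stirlingii{r+1}{m+1},
\]
which finishes the proof.

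As a combinatorial check, the last sum counts partitions of $\{1,\dots,r+1\}$ into $m+1$ blocks: the block containing a distinguished element is labelled last, and the remaining $m$ blocks are ordered. This gives the count $m!\stirlingii{r+1}{m+1}$ directly.

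The only delicate point is the symmetrization step. If one prefers to avoid it, an alternative is induction on $m$ using \eqref{stirling_2}, since $A_m'=mF^{m-1}f^2+F^mf'$. However, the direct Leibniz argument above seems cleaner.
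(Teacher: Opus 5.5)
Your proof is correct and complete. The Leibniz expansion of $F^m f$ and the vanishing of every term with some $s_i=0$ ($i\le m$), because $F(0)=0$, are both sound. So is the check that only $f^{(q)}(0)$ with $0\le q\le r-m$ occurs, and so is the sign/power bookkeeping giving $(-1)^{r-m}f^{r+1}(0)$. The shift-and-symmetrize evaluation of the multinomial sum through \eqref{stirling_multinom} is rigorous, since $1/(t_1!\cdots t_{m+1}!)$ and the summation set are both permutation invariant. The argument also works verbatim for $r=m$ and for $m=0$.

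The paper gives no proof of this lemma; it is quoted from \cite{milovsevic2016some}. However, the same Leibniz/multinomial expansion appears inline in the main proof, for $A_{k-j-1}^{(k-j+r)}(0)$ and $A_{k-2}^{(k+r-1)}(0)$. There the sum in which one index may vanish is evaluated differently. The paper splits it according to whether the last index is $0$ or $\ge 1$. Applying \eqref{stirling_multinom} to each piece gives $m!\stirlingii{r}{m}+(m+1)!\stirlingii{r}{m+1}$, and the recurrence \eqref{stirling_1} turns this into $m!\stirlingii{r+1}{m+1}$.

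That route uses only identities already listed in the paper and avoids the symmetrization step. Your route instead gives the direct combinatorial meaning: ordered set partitions of $\{1,\dots,r+1\}$ into $m+1$ blocks, with the block containing a distinguished element fixed in last position. Either one closes the proof.
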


\subsection{Auxiliary results}\label{subsec:auxiliary_lemmas}

Before proving the main theorem, we establish a few intermediate lemmas to handle the underlying algebraic and combinatorial complexities.

\begin{lemma}\label{lema_pomocna_za_pomocnu_dzamic}
For $1\leq m\leq N$ and all feasible real $x$, it holds that
    \[
\sum_{i=0}^{N-m}
\binom{N-m}{i}m^{\overline{i}}
\frac{x^{i}}{\prod_{q=1}^{m+i}(1-qx)}
=
\frac{1}{\prod_{q=N-m+1}^{N}(1-qx)},
\]
where $m^{\overline{i}}=m(m+1)\cdots(m+i-1)$ is the rising factorial.
\end{lemma}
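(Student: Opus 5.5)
Set $n=N-m$ and argue by induction on $n\ge 0$, with $m$ held fixed; equivalently, think of $N$ growing while $m$ stays the same. To keep the notation short, write $P_a^b=\prod_{q=a}^{b}(1-qx)$, so the statement becomes
\[
S(n,m):=\sum_{i=0}^{n}\binom{n}{i}m^{\overline{i}}\frac{x^i}{P_1^{m+i}}=\frac{1}{P_{n+1}^{n+m}}.
\]
The base case $n=0$ is immediate: both sides equal $1/P_1^m$.

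The plan for the inductive step is to derive a recurrence in $n$ that mirrors the Pascal rule. Apply $\binom{n+1}{i}=\binom{n}{i}+\binom{n}{i-1}$ to $S(n+1,m)$. The first part is exactly $S(n,m)$. In the second part, shift the index $i\mapsto i+1$ and use $m^{\overline{i+1}}=m\,(m+1)^{\overline{i}}$; this part becomes
\[
mx\sum_{i=0}^{n}\binom{n}{i}(m+1)^{\overline{i}}\frac{x^i}{P_1^{m+1+i}} = mx\,S(n,m+1).
\]
This gives the recurrence
\[
S(n+1,m)=S(n,m)+mx\,S(n,m+1).
\]
Because this recurrence raises $m$ by one, the induction has to be on $n$ with the statement proved for all $m\ge1$ at once. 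The range $m\le N$ simply corresponds to $n\ge0$, so it imposes no extra restriction.

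Feeding the induction hypothesis into the recurrence, it remains to check the identity
\[
\frac{1}{P_{n+1}^{n+m}}+\frac{mx}{P_{n+1}^{n+m+1}}=\frac{1}{P_{n+2}^{n+m+1}}.
\]
Multiply through by $P_{n+1}^{n+m+1}$. The left side becomes $(1-(n+m+1)x)+mx=1-(n+1)x$, and the right side becomes $1-(n+1)x$ as well. So the identity holds, and the induction closes.

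The only step that needs any care is lining up the shifted sum so that it really is $S(n,m+1)$. In particular, the denominator index must come out as $m+1+i$, which is why $m^{\overline{i+1}}$ has to be factored as $m\,(m+1)^{\overline{i}}$. Everything else is verifying a rational identity. An alternative route would be to expand both sides as formal power series in $x$ using Stirling generating functions, but the induction above is much more direct. Finally, "feasible $x$" just means that no factor $1-qx$ with $1\le q\le N$ vanishes, and the induction never divides by anything else.
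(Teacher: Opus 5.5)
Your proof is correct. It shares the paper's skeleton: induction on $n=N-m$, with Pascal's rule used to split the sum for $n+1$. Where you differ is in how the shifted half is handled.

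You factor $m^{\overline{i+1}}=m\,(m+1)^{\overline{i}}$. This turns the shifted half into $mx\,S(n,m+1)$ and gives the recurrence $S(n+1,m)=S(n,m)+mx\,S(n,m+1)$ at a fixed argument $x$. The step then closes with the one-line check $1-(n+m+1)x+mx=1-(n+1)x$. The price is that you need the hypothesis for all $m$ simultaneously, which you correctly point out.

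The paper instead factors $m^{\overline{j+1}}=m^{\overline{j}}(m+j)$ and uses the partial-fraction identity
\[
\frac{(m+j)x}{\prod_{q=1}^{m+j+1}(1-qx)}=\frac{1-x}{\prod_{q=1}^{m+j+1}(1-qx)}-\frac{1}{\prod_{q=1}^{m+j}(1-qx)}.
\]
With this, the $n$-level sum cancels, and what remains has denominators $\prod_{q=2}^{m+j+1}(1-qx)$. The substitution $y=x/(1-x)$, under which $1-qx=(1-x)(1-(q-1)y)$, rewrites this remainder as $(1-x)^{-m}$ times the $n$-level sum evaluated at $y$. The hypothesis is then applied with the same $m$ but at the transformed argument.

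Your route is shorter and keeps the feasibility bookkeeping trivial. Only factors $1-qx$ with $q\le N$ ever occur, and no change of variable has to be justified. The paper's route works with $m$ held fixed. It also exposes a structural fact: raising $N$ by one amounts to the index shift $q\mapsto q+1$ that $x\mapsto x/(1-x)$ induces in the products $\prod(1-qx)^{-1}$.
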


\begin{proof}
We prove this identity by induction on $n = N-m \geq 0$. For the base case $n=0$ (i.e., $N=m$), both sides simplify to $\frac{1}{\prod_{q=1}^{m}(1-qx)}$, so the statement holds. 

Assume the identity holds for $n$ and any $m \geq 1$. Let
\[
F_m(n, x) = \sum_{i=0}^{n} \binom{n}{i} m^{\overline{i}} \frac{x^i}{\prod_{q=1}^{m+i}(1-qx)}.
\]
To prove it for $n+1$, we use Pascal's formula $\binom{n+1}{i} = \binom{n}{i} + \binom{n}{i-1}$ to write:
\[
F_m(n+1, x) = \sum_{i=0}^{n} \binom{n}{i} m^{\overline{i}} \frac{x^i}{\prod_{q=1}^{m+i}(1-qx)} + \sum_{i=1}^{n+1} \binom{n}{i-1} m^{\overline{i}} \frac{x^i}{\prod_{q=1}^{m+i}(1-qx)}.
\]
The first sum is $F_m(n, x)$. In the second sum, we shift the summation index to $j = i-1$:
\[
\sum_{j=0}^{n} \binom{n}{j} m^{\overline{j+1}} \frac{x^{j+1}}{\prod_{q=1}^{m+j+1}(1-qx)} = \sum_{j=0}^{n} \binom{n}{j} m^{\overline{j}} \frac{(m+j)x \cdot x^j}{\prod_{q=1}^{m+j+1}(1-qx)}.
\]
Using the algebraic decomposition
\[
\frac{(m+j)x}{\prod_{q=1}^{m+j+1}(1-qx)} = \frac{1-x}{\prod_{q=1}^{m+j+1}(1-qx)} - \frac{1}{\prod_{q=1}^{m+j}(1-qx)},
\]
the second sum becomes:
\[
(1-x) \sum_{j=0}^{n} \binom{n}{j} m^{\overline{j}} \frac{x^j}{\prod_{q=1}^{m+j+1}(1-qx)} - F_m(n, x).
\]
Adding the first sum $F_m(n, x)$, the $F_m(n, x)$ terms cancel out:
\[
F_m(n+1, x) = (1-x) \sum_{j=0}^{n} \binom{n}{j} m^{\overline{j}} \frac{x^j}{\prod_{q=1}^{m+j+1}(1-qx)} = \sum_{j=0}^{n} \binom{n}{j} m^{\overline{j}} \frac{x^j}{\prod_{q=2}^{m+j+1}(1-qx)}.
\]
Let $y = \frac{x}{1-x}$. Then for $q \geq 2$, we have $1-qx = (1-x)(1-(q-1)y)$, which yields:
\[
\prod_{q=2}^{m+j+1}(1-qx) = (1-x)^{m+j} \prod_{p=1}^{m+j}(1-py).
\]
Substituting this into the expression for $F_m(n+1, x)$, we obtain:
\[
F_m(n+1, x) = \sum_{j=0}^{n} \binom{n}{j} m^{\overline{j}} \frac{x^j}{(1-x)^{m+j} \prod_{p=1}^{m+j}(1-py)} = \frac{1}{(1-x)^m} F_m(n, y).
\]
By the induction hypothesis, $F_m(n, y) = \frac{1}{\prod_{p=n+1}^{m+n}(1-py)}$. Since $1-py = \frac{1-(p+1)x}{1-x}$, we have:
\[
F_m(n, y) = \frac{(1-x)^m}{\prod_{p=n+1}^{m+n}(1-(p+1)x)} = \frac{(1-x)^m}{\prod_{q=n+2}^{m+n+1}(1-qx)}.
\]
Therefore,
\[
F_m(n+1, x) = \frac{1}{(1-x)^m} \frac{(1-x)^m}{\prod_{q=n+2}^{m+n+1}(1-qx)} = \frac{1}{\prod_{q=n+2}^{m+n+1}(1-qx)},
\]
which completes the induction step. This concludes the proof of the lemma.    
\end{proof}

\begin{lemma}\label{lema_pomocna_dzamic}
For $1\leq m\leq N$ and $t\geq -1$ define
\[
A_m^N(t)=
\frac{1}{(m-1)!}\sum_{i=0}^{t+1}(m+i-1)!\binom{N-m}{i}\stirlingii{m+t+1}{m+i}.
\]
Then, it holds that
\[
\sum_{t\geq -1} A_m^N(t)x^{t+1}
=
\prod_{q=N-m+1}^{N}\frac{1}{1-qx}.
\]
\end{lemma}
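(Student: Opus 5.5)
We expand the left-hand side as a double sum, swap the order of summation so that the inner sum over $t$ becomes a known Stirling generating function, and then finish with Lemma~\ref{lema_pomocna_za_pomocnu_dzamic}.

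Substituting the definition of $A_m^N(t)$ and writing $s=t+1\geq 0$, the left-hand side is
\[
\sum_{s\geq 0} x^{s}\sum_{i=0}^{s}\frac{(m+i-1)!}{(m-1)!}\binom{N-m}{i}\stirlingii{m+s}{m+i}.
\]
The factor $\frac{(m+i-1)!}{(m-1)!}$ is exactly the rising factorial $m^{\overline{i}}$. The binomial $\binom{N-m}{i}$ vanishes for $i>N-m$, and the Stirling number vanishes for $i>s$. So we may interchange the sums: $i$ runs over $0\le i\le N-m$ and $s$ over $s\ge i$. All terms are nonnegative for small positive $x$, and the series converge for $|x|<1/N$, so the interchange is legitimate. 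If preferred, the whole argument can be read as an identity of formal power series, which avoids convergence questions entirely.

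This gives
\[
\sum_{i=0}^{N-m}\binom{N-m}{i}m^{\overline{i}}\sum_{s\geq i}\stirlingii{m+s}{m+i}x^{s}.
\]
The inner sum is handled with the classical generating function
\[
\sum_{a\geq b}\stirlingii{a}{b}y^{a}=\frac{y^{b}}{\prod_{q=1}^{b}(1-qy)}.
\]
If it needs justifying, it follows by induction on $b$ from recurrence \eqref{stirling_1} in Lemma~\ref{stirling}: multiply by $y^a$ and sum over $a$ to get $G_b(y)=yG_{b-1}(y)+byG_b(y)$.

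Take $b=m+i$ and $a=m+s$, and divide by $x^{m}$. The inner sum becomes
\[
\sum_{s\ge i}\stirlingii{m+s}{m+i}x^s=\frac{x^{i}}{\prod_{q=1}^{m+i}(1-qx)}.
\]
The left-hand side is therefore
\[
\sum_{i=0}^{N-m}\binom{N-m}{i}m^{\overline{i}}\frac{x^{i}}{\prod_{q=1}^{m+i}(1-qx)}.
\]
This is precisely the left-hand side of Lemma~\ref{lema_pomocna_za_pomocnu_dzamic}. That lemma equates it to $\prod_{q=N-m+1}^{N}(1-qx)^{-1}$, which proves the claim.

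The only real obstacle is bookkeeping: matching the index shift $s=t+1$ with the Stirling indices $m+s$ and $m+i$, and confirming that the summation ranges coincide after the interchange. The case $t=-1$ corresponds to the constant term $s=0$, where only $i=0$ contributes and the value is $\stirlingii{m}{m}=1$, consistent with the right-hand side.
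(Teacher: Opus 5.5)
Your proposal is correct and follows the paper's proof essentially step for step: apply the Stirling generating function $\sum_{t\ge -1}\stirlingii{m+t+1}{m+i}x^{t+1}=x^{i}/\prod_{q=1}^{m+i}(1-qx)$, identify $(m+i-1)!/(m-1)!$ with $m^{\overline{i}}$, and conclude by Lemma~\ref{lema_pomocna_za_pomocnu_dzamic}. The paper leaves two details implicit that you make explicit: the interchange of summation, with the $i$-range truncated at $N-m$, and the derivation of the generating function from \eqref{stirling_1}; these do not change the argument.
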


\begin{proof}
   We use the standard generating function for Stirling numbers of the second kind \citep[see, e.g.,][Eq.~2]{kesidis2018generating}:
   \begin{align*}
       \sum_{t\geq -1}\stirlingii{m+t+1}{m+i}x^{t+1}
&= x^{-m}\sum_{t\geq -1}\stirlingii{m+t+1}{m+i}x^{m+t+1} \\
&= x^{-m}\frac{x^{m+i}}{(1-x)(1-2x)\cdots(1-(m+i)x)} \\
&= \frac{x^i}{(1-x)(1-2x)\cdots(1-(m+i)x)}
   \end{align*}

It follows that
\begin{align*}
\sum_{t\geq -1} A_m^N(t)x^{t+1}
&=\sum_{i=0}^{N-m}
\frac{(m+i-1)!}{(m-1)!}\binom{N-m}{i}
\frac{x^{i}}{\prod_{q=1}^{m+i}(1-qx)}. 
\end{align*}

Since
\[
\frac{(m+i-1)!}{(m-1)!}=m(m+1)\cdots(m+i-1),
\]
previous expression becomes
\[
\sum_{i=0}^{N-m}
\binom{N-m}{i}m^{\overline{i}}
\frac{x^{i}}{\prod_{q=1}^{m+i}(1-qx)}.
\]
The proof is concluded by applying Lemma \ref{lema_pomocna_za_pomocnu_dzamic}.

\end{proof}

\begin{lemma}\label{lema_orig_teska}
    For $1 \leq j < k \leq n$ and any $r \geq 1$ it holds that
    \begin{align}\label{jednakost_teska}
        &\frac{1}{(k-1)!}\sum_{i=0}^{r+1} (i+k-1)! \binom{n-k}{i} \stirlingii{k+r+1}{i+k} \nonumber \\
        & \qquad = \frac{1}{(k-j-1)!} \sum_{i=0}^{r+1} (k-j-1+i)! \binom{n-k+j}{i} \sum_{l=k-2}^{k+r-1} \stirlingii{l-j+2}{k-j+i} \nonumber \\
        & \qquad \qquad \qquad \qquad \qquad \times \sum_{s=0}^{k+r-l-1} \binom{n-k}{s} \frac{(s+j-1)!}{(j-1)!} \stirlingii{j+k+r-l-1}{s+j}
    \end{align}
\end{lemma}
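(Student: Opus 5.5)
The plan is to recognize both sides of \eqref{jednakost_teska} as coefficients of generating functions and to reduce the identity to an equality of products via Lemma~\ref{lema_pomocna_dzamic}.

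\textbf{Left-hand side.} In the notation of Lemma~\ref{lema_pomocna_dzamic}, the left-hand side is exactly $A_k^n(r)$. Hence it is the coefficient of $x^{r+1}$ in
\[
\prod_{q=n-k+1}^{n}\frac{1}{1-qx}.
\]

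\textbf{Inner sum on the right.} The innermost sum over $s$ has the same shape, with $m=j$, $N=n-k+j$ and $t=k+r-l-2$. Indeed $j+t+1=j+k+r-l-1$, $N-m=n-k$, and the prefactor $(s+j-1)!/(j-1)!$ is as required. The upper limit $k+r-l-1=t+1$ also matches the range of $i$ in $A_m^N(t)$; extra terms would vanish anyway, since the Stirling number is zero when its lower index exceeds its upper one. So the inner sum equals $A_j^{n-k+j}(k+r-l-2)$, the coefficient of $x^{k+r-l-1}$ in
\[
\prod_{q=n-k+1}^{n-k+j}\frac{1}{1-qx}.
\]
As $l$ runs from $k-2$ to $k+r-1$, the exponent $t$ runs from $r$ down to $-1$, which is allowed.

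\textbf{Middle sum on the right.} Substitute $u=l-k+1$, so $u$ runs from $-1$ to $r$. The middle Stirling number becomes $\stirlingii{(k-j)+u+1}{(k-j)+i}$. Together with $(k-j-1+i)!/(k-j-1)!$ and $\binom{n-k+j-(k-j)}{i}$, this is $A_{k-j}^{n}$ only if the binomial top equals $N-m$ with $N=n$, $m=k-j$. That would require $\binom{n-k+j}{i}=\binom{n-(k-j)}{i}$, which holds since $n-(k-j)=n-k+j$. So $N=n$, $m=k-j$, and this factor is the coefficient of $x^{u+1}$ in
\[
\prod_{q=n-k+j+1}^{n}\frac{1}{1-qx}.
\]

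\textbf{Convolution.} After interchanging the $i$- and $l$-sums, the right-hand side reads $\sum_{u=-1}^{r}A_{k-j}^{n}(u)\,A_j^{n-k+j}(r-u-1)$. The exponents satisfy $(u+1)+(r-u)=r+1$, so this is the coefficient of $x^{r+1}$ in the product of the two generating functions. Those products cover the disjoint ranges $q\in[n-k+1,n-k+j]$ and $q\in[n-k+j+1,n]$, whose union is $[n-k+1,n]$. The product is therefore exactly the left-hand generating function, which proves the identity.

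\textbf{Main obstacle.} The delicate step is the bookkeeping of index ranges. Specifically, one must check that:
\begin{itemize}
\item the truncation $i\le r+1$ agrees with the range $i\le t+1$ of $A_m^N(t)$;
\item the range of $l$ yields every $u\ge -1$ that can contribute;
\item the conditions $m\ge1$ hold, which follows from $j\ge1$ and $k-j\ge1$.
\end{itemize}
In each case, terms outside the ranges vanish because of zero Stirling numbers or zero binomial coefficients.
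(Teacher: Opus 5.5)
Your proposal is correct and is essentially the paper's own proof. Both arguments identify the left-hand side as $A_k^n(r)$ and the two right-hand factors as $A_{k-j}^{n}(l-k+1)$ and $A_j^{n-k+j}(k+r-l-2)$. Both then conclude via the Cauchy product of the generating functions from Lemma~\ref{lema_pomocna_dzamic}, which multiply to $\prod_{q=n-k+1}^{n}(1-qx)^{-1}$.

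The expression $\binom{n-k+j-(k-j)}{i}$ in your middle-sum paragraph is a slip: the binomial there is just $\binom{n-k+j}{i}=\binom{n-(k-j)}{i}$, as you then correctly conclude.
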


\begin{proof}
    We apply Lemma \ref{lema_pomocna_dzamic} on both sides of the identity \eqref{jednakost_teska}.

The left-hand side (LHS) is exactly $A_k^n(r)$, since
\[
A_k^n(r)=
\frac{1}{(k-1)!}\sum_{i=0}^{r+1}(i+k-1)!\binom{n-k}{i}\stirlingii{k+r+1}{i+k}.
\]
By Lemma \ref{lema_pomocna_dzamic}, the LHS is the coefficient of $x^{r+1}$ 
in the function
\[
\prod_{q=n-k+1}^{n}\frac{1}{1-qx}.
\]

Let us move on to the right-hand side (RHS). Let
\[
h=k-j.
\]
Then $h\geq 1$. The first factor on the right-hand side can be recognized as
\[
A_{k-j}^{n}(l-k+1),
\]
since
\[
(k-j)+(l-k+1)+1=l-j+2.
\]
Indeed,
\[
A_{k-j}^{n}(l-k+1)
=\frac{1}{(k-j-1)!}\sum_{i=0}^{l-k+2}
(k-j+i-1)!\binom{n-k+j}{i}\stirlingii{l-j+2}{k-j+i}.
\]
In the given sum, the upper bound for $i$ is $r+1$, but for the considered 
range $l\leq k+r-1$, the terms outside the natural range do not contribute 
anyway, as the corresponding Stirling numbers vanish.

The second factor on the RHS is
\[
A_j^{\,n-k+j}(k+r-l-2),
\]
since
\[
j+(k+r-l-2)+1=j+k+r-l-1.
\]
Thus, the RHS can be written as
\[
\sum_{l=k-2}^{k+r-1}
A_{k-j}^{n}(l-k+1)\,A_j^{\,n-k+j}(k+r-l-2).
\]

Let us introduce the substitution
\[
u=l-k+1.
\]
As $l$ ranges from $k-2$ to $k+r-1$, $u$ ranges from $-1$ to $r$. Therefore, 
the right-hand side becomes
\[
\sum_{u=-1}^{r}
A_{k-j}^{n}(u)\,A_j^{\,n-k+j}(r-1-u).
\]
This is the convolution of the progressions $A_{k-j}^{n}$ and $A_j^{\,n-k+j}$. 
Consequently, by the definition of the Cauchy product (convolution) of two power series, the coefficient of $x^{r+1}$ in the product of their generating functions
\[
\left(\sum_{u\geq -1} A_{k-j}^{n}(u)x^{u+1}\right)\left(\sum_{v\geq -1} A_j^{\,n-k+j}(v)x^{v+1}\right)
\]
is given by
\[
\sum_{\substack{(u+1)+(v+1) = r+1 \\ u \geq -1, \, v \geq -1}} A_{k-j}^{n}(u) A_j^{\,n-k+j}(v)
= \sum_{u=-1}^{r} A_{k-j}^{n}(u) A_j^{\,n-k+j}(r-1-u),
\]
which is exactly the RHS expression.

By the Lemma \ref{lema_pomocna_dzamic}, we have
\[
\sum_{u\geq -1} A_{k-j}^{n}(u)x^{u+1}
=
\prod_{q=n-k+j+1}^{n}\frac{1}{1-qx},
\]
and also
\[
\sum_{v\geq -1} A_j^{\,n-k+j}(v)x^{v+1}
=
\prod_{q=n-k+1}^{n-k+j}\frac{1}{1-qx}.
\]
Their product is
\[
\left(\prod_{q=n-k+j+1}^{n}\frac{1}{1-qx}\right)
\left(\prod_{q=n-k+1}^{n-k+j}\frac{1}{1-qx}\right)
=
\prod_{q=n-k+1}^{n}\frac{1}{1-qx}.
\]
But this is precisely the generating function whose coefficient of $x^{r+1}$ 
is equal to $A_k^n(r)$, i.e., the left-hand side of the required identity. Thus, the LHS and the RHS have the same coefficient of 
$x^{r+1}$ in the same generating function. Therefore, they are equal
and the identity \eqref{jednakost_teska} is proven.
\end{proof}

{
\begin{lemma}\label{lem:C1_positive}
Assume that $j,k,n,r$ are integers, $1\le j<k$, $n-k>k-j$, and $r\ge 0$. Let
\[
\eta=\sum_{u=1}^{j}(n-k+u)^{r+1}.
\]
Define
\[
C_1=\frac{1}{(n-k)!}\left[
\binom{k-j+r}{r+1}
+\binom{k-j+r}{r+2}
+\eta
-\binom{k+r-1}{r+1}
-\binom{k+r-1}{r+2}
-\binom{k+r}{r+1}
\right],
\]
where $\binom{N}{M}=0$ whenever $M>N$. Then $C_1>0$.
\end{lemma}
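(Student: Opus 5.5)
The plan is to collapse the binomial expression inside the bracket using Pascal's rule and the hockey-stick identity. After that, positivity reduces to a term-by-term comparison between the $j$ summands of $\eta$ and $j$ binomial coefficients. The factor $1/(n-k)!$ is positive, so it suffices to show that the bracket is strictly positive.

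First, I would apply Pascal's rule $\binom{N}{r+1}+\binom{N}{r+2}=\binom{N+1}{r+2}$ twice. This identity also holds in the degenerate cases where $r+2>N$, with the convention $\binom{N}{M}=0$ for $M>N$.
\begin{itemize}
\item With $N=k-j+r$ it gives $\binom{k-j+r}{r+1}+\binom{k-j+r}{r+2}=\binom{k-j+r+1}{r+2}$.
\item With $N=k+r-1$ it gives $\binom{k+r-1}{r+1}+\binom{k+r-1}{r+2}=\binom{k+r}{r+2}$. Adding $\binom{k+r}{r+1}$ and using Pascal once more yields $\binom{k+r+1}{r+2}$.
\end{itemize}
Hence the bracket equals $\eta-\bigl[\binom{k+r+1}{r+2}-\binom{k-j+r+1}{r+2}\bigr]$.

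Next, by the hockey-stick identity $\binom{M+1}{r+2}=\sum_{m\le M}\binom{m}{r+1}$, the difference in square brackets is a sum of exactly $j$ terms:
\[
\binom{k+r+1}{r+2}-\binom{k-j+r+1}{r+2}=\sum_{m=k-j+r+1}^{k+r}\binom{m}{r+1}=\sum_{u=1}^{j}\binom{k-j+r+u}{r+1}.
\]
It therefore suffices to prove, for each $u=1,\dots,j$, the strict inequality
\[
\binom{k-j+r+u}{r+1}<(n-k+u)^{r+1}.
\]

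To prove this, set $a=k-j+u-1\ge 1$, so that the binomial coefficient can be written as a product:
\[
\binom{a+r+1}{r+1}=\prod_{i=1}^{r+1}\frac{a+i}{i}.
\]
Each factor satisfies $(a+i)/i\le a+1$, because $a+i\le i(a+1)$ is equivalent to $a\le ia$, which holds for $i\ge 1$. Therefore
\[
\binom{a+r+1}{r+1}\le (a+1)^{r+1}=(k-j+u)^{r+1}.
\]
The hypothesis $n-k>k-j$ gives $k-j+u<n-k+u$, so $(k-j+u)^{r+1}<(n-k+u)^{r+1}$, since $r+1\ge 1$ and both bases are positive.

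Summing these strict inequalities over $u=1,\dots,j$ (a nonempty range, as $j\ge 1$) gives $\eta$ strictly larger than the bracketed difference. Hence the bracket is positive, and so $C_1>0$.

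I expect no real obstacle here. The only points needing care are checking that the Pascal and hockey-stick manipulations remain valid when some binomial coefficients vanish, and confirming the index bookkeeping that matches the summand indexed by $u$ in $\eta$ to the binomial coefficient $\binom{k-j+r+u}{r+1}$.
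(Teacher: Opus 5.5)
Your proposal is correct and follows essentially the same route as the paper. Pascal's rule collapses the bracket to $\eta-\bigl[\binom{k+r+1}{r+2}-\binom{k-j+r+1}{r+2}\bigr]$, the hockey-stick identity splits this into the $j$ terms $\binom{k-j+r+u}{r+1}$, and a termwise comparison with $(n-k+u)^{r+1}$ finishes it. The only minor difference is where strictness enters: you use the non-strict factorwise bound $\binom{a+r+1}{r+1}\le(a+1)^{r+1}$ and take strictness from $n-k>k-j$, whereas the paper proves the strict bound $\binom{b+r-1}{r+1}<b^{r+1}$ for $b=k-j+u+1$ (splitting into $r=0$ and $r\ge1$) and then uses $n-k+u\ge b$.
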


\begin{proof}
Set
\[
a=k-j,\qquad m=n-k.
\]
Then $a\ge1$, $j\ge1$, $k=a+j$, and the assumption $n-k>k-j$ gives $m>a$. Since $m$ and $a$ are integers,
\[
m\ge a+1.
\]
Put
\[
\widetilde C_1=(n-k)!C_1=m!C_1.
\]
From the definition of $C_1$,
\begin{align*}
\widetilde C_1
&=
\binom{a+r}{r+1}
+\binom{a+r}{r+2}
+\sum_{u=1}^{j}(m+u)^{r+1} \\
&\quad
-\binom{a+j+r-1}{r+1}
-\binom{a+j+r-1}{r+2}
-\binom{a+j+r}{r+1}.
\end{align*}

By Pascal's identity,
\[
\binom{a+r}{r+1}+\binom{a+r}{r+2}
=
\binom{a+r+1}{r+2}.
\]
Also,
\begin{align*}
&\binom{a+j+r-1}{r+1}
+\binom{a+j+r-1}{r+2}
+\binom{a+j+r}{r+1} \\
&\qquad=
\binom{a+j+r}{r+2}
+\binom{a+j+r}{r+1}
=
\binom{a+j+r+1}{r+2}.
\end{align*}
Therefore,
\[
\widetilde C_1
=
\sum_{u=1}^{j}(m+u)^{r+1}
+
\binom{a+r+1}{r+2}
-
\binom{a+j+r+1}{r+2}.
\]

We now use the finite-difference identity
\[
\binom{N}{M}-\binom{L}{M}
=
\sum_{q=L}^{N-1}\binom{q}{M-1},
\]
valid for integers $N\ge L$ and $M\ge1$. This is the standard hockey-stick/finite-difference identity \citep[e.g.][Sec. 5.1]{GrahamKnuthPatashnik1994}. For completeness, we re-derive it here. By Pascal's identity,
\[
\binom{q+1}{M}
=
\binom{q}{M}
+
\binom{q}{M-1},
\]
so
\[
\binom{q+1}{M}-\binom{q}{M}
=
\binom{q}{M-1}.
\]
Summing from $q=L$ to $q=N-1$ gives
\[
\sum_{q=L}^{N-1}
\left(
\binom{q+1}{M}-\binom{q}{M}
\right)
=
\sum_{q=L}^{N-1}
\binom{q}{M-1}.
\]
The left-hand side telescopes:
\[
\sum_{q=L}^{N-1}
\left(
\binom{q+1}{M}-\binom{q}{M}
\right)
=
\binom{N}{M}-\binom{L}{M}.
\]
Hence
\[
\binom{N}{M}-\binom{L}{M}
=
\sum_{q=L}^{N-1}\binom{q}{M-1}.
\]

Apply this with
\[
N=a+j+r+1,\qquad L=a+r+1,\qquad M=r+2.
\]
Then
\begin{align*}
\binom{a+j+r+1}{r+2}
-
\binom{a+r+1}{r+2}
&=
\sum_{q=a+r+1}^{a+j+r}
\binom{q}{r+1} \\
&=
\sum_{p=0}^{j-1}
\binom{a+r+1+p}{r+1} \\
&=
\sum_{u=1}^{j}
\binom{a+r+u}{r+1}.
\end{align*}
Thus
\[
\widetilde C_1
=
\sum_{u=1}^{j}
\left(
(m+u)^{r+1}
-
\binom{a+r+u}{r+1}
\right).
\]

Now fix $u\in\{1,\dots,j\}$ and set
\[
b=a+u+1.
\]
Since $a\ge1$ and $u\ge1$, we have $b\ge3$, in particular $b\ge2$. Moreover, $m\ge a+1$ implies
\[
m+u\ge a+1+u=b.
\]
Also,
\[
\binom{a+r+u}{r+1}
=
\binom{b+r-1}{r+1}.
\]
We claim that
\[
b^{r+1}>\binom{b+r-1}{r+1}.
\]

If $r=0$, then
\[
\binom{b-1}{1}=b-1<b=b^{1}.
\]

If $r\ge1$, then
\[
\binom{b+r-1}{r+1}
=
\frac{(b-1)b(b+1)\cdots(b+r-1)}{(r+1)!}.
\]
Since $b-1<b$ and, for $t=1,\dots,r-1$,
\[
b+t\le (t+1)b,
\]
we obtain
\[
(b-1)b(b+1)\cdots(b+r-1)
<
b\cdot b\prod_{t=1}^{r-1}((t+1)b)
=
b^{r+1}r!,
\]
where the product over $t=1,\dots,r-1$ is interpreted as $1$ when $r=1$. Therefore
\[
\binom{b+r-1}{r+1}
<
\frac{b^{r+1}r!}{(r+1)!}
=
\frac{b^{r+1}}{r+1}
<
b^{r+1}.
\]
Consequently,
\[
(m+u)^{r+1}
\ge b^{r+1}
>
\binom{b+r-1}{r+1}
=
\binom{a+r+u}{r+1}.
\]
Thus every summand in
\[
\widetilde C_1
=
\sum_{u=1}^{j}
\left(
(m+u)^{r+1}
-
\binom{a+r+u}{r+1}
\right)
\]
is strictly positive. Hence
\[
\widetilde C_1>0.
\]
Since $(n-k)! = m! >0$, we conclude that
\[
C_1>0.
\]
\end{proof}
}

\subsection{Proof of the main statement}\label{main_statement}

With the technical lemmas in place, we are now ready to present and prove the main result of this work.
\begin{proof}
Let $D_t$, $t = 1, 2, \dots, j$ be the probability density function of a random variable $\sum_{i=1}^t \frac{1}{n-k+i}X_{0i}$. Then we can write
\begin{align*}
D_j(x) &= \int_0^x (n-k+j)f((n-k+j)(x-y))D_{j-1}(y)dy, \\
D_1(x) &= (n-k+1)f((n-k+1)x).
\end{align*}
By mathematical induction one can conclude that
\begin{align*}
D_{j}^{(m)}(x) &= \sum_{l=0}^{m-1}(n-k+j)^{l+1}  f^{(l)}(0)D_{j-1}^{(m-1-l)}(x) \\
& + \int_0^x (n-k+j)^{m+1} f^{(m)}((n-k+j)(x-y))D_{j-1}(y) dy,
\end{align*}
from which it follows that 
\begin{equation}\label{Dj_m} 
D_j^{(m)}(0) =  \sum_{l=0}^{m-1}(n-k+j)^{l+1}  f^{(l)}(0)D_{j-1}^{(m-1-l)}(0).
\end{equation}
{ Suppose, for a moment, $j>1$ and} put $m=j-1$. We obtain:
\[ D_j^{(j-1)}(0) = \sum_{l=0}^{j-2} (n-k+j)^{l+1} f^{(l)}(0)D_{j-1}^{(j-2-l)}(0).  \]
For a term in this sum to be non-zero, it is necessary that $j-2-l \geq j-2$, i.e. that $l=0$, from where we get
\[ D_j^{(j-1)}(0) = (n-k+j)f(0) D_{j-1}^{(j-2)}(0), \]
and, inductively, 
\begin{equation}\label{Dj_minus1}
D_j^{(j-1)}(0) = \Pi_j f^j(0),
\end{equation}
where 
\begin{align}\label{Pi_j}
    \Pi_j = \prod_{u=1}^{j} (n-k+u).
\end{align}

Now let us put $m=j$ in \eqref{Dj_m}. We have that
\[ D_j^{(j)}(0) =  \sum_{l=0}^{j-1}(n-k+j)^{l+1}  f^{(l)}(0)D_{j-1}^{(j-1-l)}(0). \]
For a term to be non-zero in this sum, it is necessary that $j-1-l \geq j-2$, i.e. $l \in \{ 0, 1 \}$, so we can conclude that
\begin{align*}
D_j^{(j)}(0) &= (n-k+j)f(0)D_{j-1}^{(j-1)}(0) + (n-k+j)^2f'(0) D_{j-1}^{(j-2)}(0) \\
&= (n-k+j)f(0)D_{j-1}^{(j-1)}(0) + (n-k+j)^2f'(0) \Pi_{j-1} f^{j-1}(0) \\
&= (n-k+j)f(0)D_{j-1}^{(j-1)}(0) + (n-k+j)f'(0) \Pi_{j} f^{j-1}(0).
\end{align*}
Inductively, we obtain:
\begin{equation}\label{D_j_j}
D_j^{(j)}(0) = \Pi_j \left( j(n-k) + \frac{j(j+1)}{2}  \right)f'(0)f^{j-1}(0),
\end{equation}
{which obviously holds also for $j=1$.}
Now, let $A_m(x) = F^{m}(x)f(x)$. For $A_m^{(u)}(0) \neq 0$ to hold, $u \geq m$ obviously needs to hold. Furthermore, we can easily calculate that 
\begin{equation}\label{A_m_m}
A_m^{(m)} (0) = m! f^{m+1}(0),
\end{equation}
and
\begin{equation}\label{A_m_plus1} 
A_m^{(m+1)}(0) =  (m+1)! \left(1 + \frac{m}{2}\right) f^m(0)f'(0),
\end{equation}
by using Leibniz' general differentiation rule. 
Now, we equate the respective densities of terms from the statement of the Theorem. After canceling $n!$ from both sides, we obtain:
\begin{align*}
\frac{1}{(k-j-1)!(n-k+j)!} & \int_0^x F^{k-j-1}(x-y)(1 - F(x-y))^{n-k+j}{f(x-y)}D_j(y)dy \\
&= \frac{1}{(k-1)!(n-k)!} F^{k-1}(1 - F(x))^{n-k}f(x).
\end{align*}
After applying binomial formula to $(1 - F(x-y))^{n-k+j}$ and $(1 - F(x))^{n-k}$, rearranging the terms and writing $F^{k-1+i}(x) ={(k-1+i)}\int_0^x A_{k-2+i}(y)dy$, we get:
\begin{align}\label{jednakost_gustina}
&\frac{1}{(k-j-1)!(n-k+j)!}  \sum_{i=0}^{n-k+j} (-1)^i \binom{n-k+j}{i} \int_0^x A_{k-j-1+i}(x-y)D_j(y) dy \nonumber \\
& \;\;\;\;\;\; = \frac{1}{(k-1)!(n-k)!} \sum_{i=0}^{n-k} (-1)^i \binom{n-k}{i}(k-1+i)f(x) \int_0^x A_{k-2+i}(y)dy.
\end{align}
After differentiating both sides $k$ times we obtain: 
\begin{align}\label{diferencirano_k_puta}
& \frac{1}{(k-j-1)!(n-k+j)!}  \sum_{i=0}^{n-k+j} (-1)^i \binom{n-k+j}{i} \left[ \sum_{l=0}^{k-1}D_j^{(k-1-l)}(x)A_{k-j-1+i}^{(l)}(0) \right. \nonumber \\
& \left. \qquad \qquad \qquad \qquad \qquad \qquad \qquad \qquad \qquad \qquad \qquad \qquad  + \int_0^x A_{k-j-1+i}^{(k)}(x-y)D_j(y) dy \right] \nonumber \\
& = \frac{1}{(k-1)!(n-k)!} \sum_{i=0}^{n-k} (-1)^i \binom{n-k}{i}(k-1+i) \left[ \sum_{l=1}^k {\binom{k}{l}} f^{(k-l)}(x)A_{i+k-2}^{(l-1)}(x) \right. \nonumber \\
&  \left. \qquad \qquad \qquad \qquad \qquad \qquad \qquad \qquad  \qquad \qquad \qquad \qquad + f^{(k)}(x) \int_0^x A_{i+k-2}(y)dy \right].
\end{align}
By setting $x=0$ and canceling the integrals (for $x=0$ they are equal to zero), we get:
\begin{align}\label{diferencirano_u_nuli}
& \frac{1}{(k-j-1)!(n-k+j)!}  \sum_{i=0}^{n-k+j} (-1)^i \binom{n-k+j}{i} \sum_{l=0}^{k-1}D_j^{(k-1-l)}(0)A_{k-j-1+i}^{(l)}(0) \nonumber \\
& \;\;\; = \frac{1}{(k-1)!(n-k)!} \sum_{i=0}^{n-k} (-1)^i \binom{n-k}{i}(k-1+i) \sum_{l=1}^k \binom{k}{l} f^{(k-l)}(0)A_{i+k-2}^{(l-1)}(0).
\end{align} 
For $A_{i+k-2}^{(l-1)}(0)$ to be nonzero, $l-1 \geq i+k-2$, i.e. $l \geq i+k-1$ needs to hold, which is achieved in only three cases:
\begin{enumerate}
\item $i=0$ and $l = k-1$;
\item $i=0$ and $l=k$;
\item $i=1$ and $l = k$.
\end{enumerate}
Let $R_{0, k-1}, R_{0, k}$ and $R_{1, k}$ be the corresponding terms on the right-hand side of \eqref{diferencirano_u_nuli}, including scaling constants. Now we compute them. First we have:
\begin{align*}
R_{0, k-1} &= \frac{1}{(k-1)!(n-k)!}(k-1)k f'(0) A_{k-2}^{(k-2)}(0) \\
&= \frac{1}{(k-2)!(n-k)!}(k-2)!k f'(0)f^{k-1}(0) \\
&= \frac{1}{(n-k)!}k f'(0)f^{k-1}(0).
\end{align*}
Similarly, using \eqref{A_m_plus1} we can obtain:
\begin{align*}
R_{0, k} &= \frac{1}{(k-1)!(n-k)!} (k-1) f(0) A_{k-2}^{(k-1)}(0) \\
&= \frac{k-1}{(k-1)!(n-k)!} f(0)(k-1)! \left(  f^{k-2}(0)f'(0) + f'(0) f^{k-2}(0)\frac{k-2}{2}  \right) \\
&= \frac{k-1}{(n-k)!}f'(0)f^{k-1}(0) \left( 1 + \frac{k-2}{2} \right) \\
&= \frac{1}{(n-k)!}\frac{k(k-1)}{2} f'(0)f^{k-1}(0).
\end{align*}
Finally, we have:
\begin{align*}
R_{1, k} &= -\frac{1}{(k-1)!(n-k)!} k(n-k) f(0) A_{k-1}^{(k-1)}(0) \\
&= -\frac{1}{(k-1)!(n-k)!} k(n-k) f(0) (k-1)! f^k(0) \\
&= -\frac{k(n-k)}{(n-k)!} f^{k+1}(0).
\end{align*}

Now we focus on the left-hand side of \eqref{diferencirano_u_nuli}. For  $D_j^{(k-1-l)}(0) A_{k-j-1+i}^{(l)}(0)$ to be nonzero, $k-1-l \geq j-1$ and $l \geq k-j-1+i$ need to hold. Combining these two, we get the condition
\begin{equation}\label{uslov_za_l}
k-j-1 +i \leq l \leq k-j,
\end{equation}
which holds in exactly three cases:
\begin{enumerate}
\item $i=0$ and $l = k-j-1$;
\item $i=0$ and $l = k-j$;
\item $i=1$ and $l= k-j$.
\end{enumerate}
Corresponding terms on the left side (including scaling constants) will be denoted as $L_{0,k-j-1}$, $L_{0, k-j}$ and $L_{1, k-j}$, respectively. We have:
\begin{align*}
L_{0, k-j-1} &= \frac{1}{(k-j-1)!(n-k+j)!} D_{j}^{(k-1-(k-j-1))}(0) A_{k-j-1+0}^{(k-j-1)}(0) \\
&= \frac{1}{(k-j-1)!(n-k+j)!} D_j^{(j)}(0) A_{k-j-1}^{(k-j-1)}(0) \\
&= \frac{1}{(n-k+j)!}\Pi_j \left( j(n-k) + \frac{j(j+1)}{2} \right)f'(0)f^{j-1}(0)f^{k-j}(0) \\
&= \frac{\Pi_j}{(n-k+j)!} \left( j(n-k) + \frac{j(j+1)}{2} \right)f'(0)f^{k-1}(0) \\
&= \frac{1}{(n-k)!} \left( j(n-k) + \frac{j(j+1)}{2} \right)f'(0)f^{k-1}(0) \\
&= \frac{1}{(n-k)!} \frac{2nj - 2kj + j^2 + j}{2}  f'(0)f^{k-1}(0).
\end{align*}
Similarly, 
\begin{align*}
L_{0, k-j} &= \frac{1}{(k-j-1)!(n-k+j)!} D_{j}^{(j-1)}(0) A_{k-j-1}^{(k-j)}(0) \\
&= \frac{\Pi_j}{(k-j-1)!(n-k+j)!} f^j(0) \cdot  (k-j)! \left(1+ \frac{k-j-1}{2}  \right)f'(0)f^{k-1-j}(0) \\
&=\frac{k-j}{(n-k)!}\frac{k-j+1}{2}f'(0) f^{k-1}(0) \\
&= \frac{1}{(n-k)!} \frac{k^2 -2kj + j^2 +k -j}{2} f'(0) f^{k-1}(0).
\end{align*}
Finally, we have: 
\begin{align*}
L_{1, k-j} &= - \frac{1}{(k-j-1)!(n-k+j)!}(n-k+j) D_j^{(k-1-(k-j))}(0) A_{k-j-1+1}^{(k-j)}(0) \\
&=- \frac{1}{(k-j-1)!(n-k+j)!}(n-k+j) D_j^{(j-1)} A_{k-j}^{(k-j)} (0) \\
&= - \frac{\Pi_j}{(k-j-1)!(n-k+j)!}(n-k+j) f^j(0) (k-j)! f^{k-j+1}(0) \\
&= -\frac{(k-j)(n-k+j)}{(n-k)!}f^{k+1}(0).
\end{align*}
Equation \eqref{diferencirano_u_nuli} now becomes:
\[  L_{0, k-j-1} + L_{0, k-j} + L_{1, k-j} = R_{0, k-1} +  R_{0, k} + R_{1, k},  \]
which we rearrange to obtain: 
\begin{equation}\label{rearrangement} 
-R_{1, k} + L_{1, k-j} = R_{0, k-1} +  R_{0, k} - (L_{0, k-j-1} + L_{0, k-j}). 
\end{equation}
The rearrangement is conducted because  $L_{1, k-j}$ and $R_{1, k}$ have $f^{k+1}(0)$ as factors, and others have $f^{k-1}(0)$. Now, we calculate three sums that appear in \eqref{rearrangement}. First: 
\begin{align*}
-R_{1, k} + L_{1, k-j} &= \frac{k(n-k)}{(n-k)!} f^{k+1}(0) -\frac{(k-j)(n-k+j)}{(n-k)!}f^{k+1}(0) \\
&= \frac{1}{(n-k)!} \left(  j^2 -2kj +nj \right)f^{k+1}(0).
\end{align*}
Next, 
\begin{align*}
R_{0, k-1} + R_{0, k} &= \frac{1}{(n-k)!}k f'(0)f^{k-1}(0)  + \frac{1}{(n-k)!}\frac{k(k-1)}{2} f'(0)f^{k-1}(0) \\
&= \frac{1}{(n-k)!} \left( k + \frac{k(k-1)}{2}  \right) f'(0)f^{k-1}(0) \\
&= \frac{1}{(n-k)!} \frac{k^2 + k}{2} f'(0)f^{k-1}(0).
\end{align*}
Finally:
\begin{align*}
L_{0, k-j-1} + L_{0, k-j} &= \frac{1}{(n-k)!} \frac{2nj - 2kj + j^2 + j}{2}  f'(0)f^{k-1}(0) \\
& \;\;\;\;\;\;\;\;\;\;\;\;\;\;\;\;\;\;\;\; + \frac{1}{(n-k)!} \frac{k^2 -2kj + j^2 +k -j}{2} f'(0) f^{k-1}(0) \\
&= \frac{1}{(n-k)!} \frac{2nj - 4kj + 2j^2 + k + k^2}{2}f'(0)f^{k-1}(0).
\end{align*}
Now, we have that
\begin{align*}
R_{0, k-1} + R_{0, k} &- (L_{0, k-j-1} + L_{0, k-j})\\
&= \frac{1}{(n-k)!} (-nj +2kj -j^2)   f'(0)f^{k-1}(0) \\
&= -\frac{1}{(n-k)!} (j^2 -2kj +nj) f'(0)f^{k-1}(0).
\end{align*}
If we go back to \eqref{rearrangement}, we have 
\begin{equation}\label{problematicna}
\frac{1}{(n-k)!} (j^2 -2kj +nj) f^{k+1}(0) = -\frac{1}{(n-k)!} \left(  j^2 -2kj +nj \right)f'(0)f^{k-1}(0). 
\end{equation}

{Since $j \neq 2k - n$, dividing both sides by $-\frac{1}{(n-k)!}\left( j^2 - 2kj + nj \right) f^{k-1}(0) \neq 0$ directly yields
\[
f'(0) = -f^2(0),
\]
which establishes \eqref{uslov_q} for the base case $q=1$. }

Suppose that \eqref{uslov_q} holds for all $q \leq r$. We have to prove that it holds for $q=r+1$. Let us differentiate equation \eqref{jednakost_gustina} $k+r$ times and substitute $x=0$. We obtain:
\begin{align}\label{diferencirano_k_plus_r_puta_u_nuli}
&\frac{1}{(k-j-1)!(n-k+j)!}  \sum_{i=0}^{n-k+j} (-1)^i \binom{n-k+j}{i}  \sum_{l=0}^{k+r-1} D_j^{(k+r-1-l)}(0)A_{k-j-1+i}^{(l)}(0)   \nonumber \\
&= \frac{1}{(k-1)!(n-k)!} \sum_{i=0}^{n-k} (-1)^i \binom{n-k}{i} (k-1+i)  \sum_{l=1}^{k+r} \binom{k+r}{l} f^{(k+r-l)}(0)A_{i+k-2}^{(l-1)}(0). 
\end{align}
Analogously to earlier discussions, we cancel zero terms and get:
\begin{align}
&\frac{1}{(k-j-1)!(n-k+j)!}  \sum_{i=0}^{r+1} (-1)^i \binom{n-k+j}{i}  \sum_{l=k-j-1+i}^{k+r-j} D_j^{(k+r-1-l)}(0)A_{k-j-1+i}^{(l)}(0)   \nonumber \\
&= \frac{1}{(k-1)!(n-k)!} \sum_{i=0}^{r+1} (-1)^i \binom{n-k}{i} (k-1+i)  \sum_{l=i+k-1}^{k+r} \binom{k+r}{l} f^{(k+r-l)}(0)A_{i+k-2}^{(l-1)}(0). \label{priprema_za_ind}
\end{align}
To apply the induction hypothesis on $A_{u}^{(v)} (0)$, $v-u \leq r$ has to hold, which for our left-hand side means that $l - (k-j-1 + i) \leq r$ needs to hold for every $l$. It does, except for $i=0$ where it is not satisfied for $l = k+r-j$. The same applies to  the right-hand side. Because of that, we split each side of the equation \eqref{priprema_za_ind} into two: one for $i=0$ and one for $i>0$. It becomes:
\begin{align*}
&\frac{1}{(k-j-1)!(n-k+j)!}   \sum_{l=k-1}^{k+r} D_j^{(k+r-1-l+j)}(0)A_{k-j-1}^{(l-j)}(0)   \nonumber \\
& + \frac{1}{(k-j-1)!(n-k+j)!}  \sum_{i=1}^{r+1} (-1)^i \binom{n-k+j}{i}  \sum_{l=i+k-1}^{k+r} D_j^{(k+r-1-l+j)}(0)A_{k-j-1+i}^{(l-j)}(0)   \nonumber \\
&= \frac{1}{(k-1)!(n-k)!}  (k-1)  \sum_{l=k-1}^{k+r} \binom{k+r}{l} f^{(k+r-l)}(0)A_{k-2}^{(l-1)}(0) \nonumber \\
& + \frac{1}{(k-1)!(n-k)!} \sum_{i=1}^{r+1} (-1)^i \binom{n-k}{i} (k-1+i)  \sum_{l=i+k-1}^{k+r} \binom{k+r}{l} f^{(k+r-l)}(0)A_{i+k-2}^{(l-1)}(0). 
\end{align*}
In the first sum, as well in the inner sum in the second row, $l$ is shifted up by $j$, to obtain the same counter as in the inner sum in the last row. Let us write the upper equality as $L_0 + L_+ = R_0 + R_+$, following the order of the terms respectively. 

First we discuss $L_0$. Induction hypothesis (using Lemma \eqref{A_m_izraz}) can be applied to $A_{k-j-1}^{(l-1)}$ for every $l$ except the $k+r$. Because of that, we write:
\begin{align*}
L_0 &= \frac{1}{(k-j-1)!(n-k+j)!} D_j^{(j-1)}(0) A_{k-j-1}^{(k+r-j)}(0) \nonumber \\
& + \frac{1}{(k-j-1)!(n-k+j)!}   \sum_{l=k-1}^{k+r-1} D_j^{(k+r-1-l+j)}(0)A_{k-j-1}^{(l-j)}(0)   \nonumber \\
&= L_0^1 + L_0^2,
\end{align*}
where $L_0^1$ and $L_0^2$ are defined to be two terms in the upper sum, respectively. We have that 
\begin{align}
L_0^1 &= \frac{1}{(k-j-1)!(n-k+j)!} \Pi_j f^j(0) A_{k-j-1}^{(k-j+r)}{(0)} \nonumber \\
&= \frac{f^j(0)}{(k-j-1)!(n-k)!}A_{k-j-1}^{(k-j+r)}{(0)}. \label{L_0^1}
\end{align}
By discussing values of $l_1, l_2, \dots l_{k-j}$ we can obtain that 
\begin{align*}
A_{k-j-1}^{(k-j+r)}{(0)} &= \sum_{\substack{l_1, \dots l_{k-j -1 } \geq 1, l_{k-j} \geq 0 \\ l_1 + \cdots + l_{k-j} = k-j+r}} \binom{k-j+r}{l_1, \dots, l_{k-j}} f^{(l_1 - 1)}(0) \cdots f^{(l_{k-j-1} - 1)}(0) f^{(l_{k-j})}(0) \\
&=\sum_{\substack{1 \leq l_1, \dots l_{k-j -1 } \leq r+1, \\ 0 \leq l_{k-j} \leq r \\ l_1 + \cdots + l_{k-j} = k-j+r}}  \binom{k-j+r}{l_1, \dots, l_{k-j}} f^{(l_1 - 1)}(0) \cdots f^{(l_{k-j-1} - 1)}(0) f^{(l_{k-j})}(0) \\
&+ \frac{(k-j+r)!}{(r+1)!} f^{k-j-1}(0) f^{(r+1)}(0) \\
&+ \frac{(k-j+r)!}{(r+2)!}(k-j-1)f^{k-j-1}(0) f^{(r+1)}(0) \\
&= (-1)^{r+1}f^{k-j+r+1}(0) \sum_{\substack{1 \leq l_1, \dots l_{k-j -1 } \leq r+1, \\ 0 \leq l_{k-j} \leq r \\ l_1 + \cdots + l_{k-j} = k-j+r}}  \frac{(k-j+r)!}{l_1! \cdots l_{k-j}!} \\
&+ \frac{(k-j+r)!}{(r+1)!} f^{k-j-1}(0) f^{(r+1)}(0) \\
&+ \frac{(k-j+r)!}{(r+2)!}(k-j-1)f^{k-j-1}(0) f^{(r+1)}(0).
\end{align*}
If we go back and substitute this into the expression \eqref{L_0^1} for $L_0^1$, we get:
\begin{align*}
L_0^1 &= \frac{1}{(k-j-1)!(n-k)!}(-1)^{r+1}f^{k+r+1}(0) \sum_{\substack{1 \leq l_1, \dots l_{k-j -1 } \leq r+1, \\ 0 \leq l_{k-j} \leq r \\ l_1 + \cdots + l_{k-j} = k-j+r}}  \frac{(k-j+r)!}{l_1! \cdots l_{k-j}!} \\
&+ \frac{(k-j+r)!}{(r+1)!(k-j-1)!(n-k)!} f^{k-1}(0) f^{(r+1)}(0) \\
&+ \frac{(k-j+r)!}{(r+2)!(k-j-1)!(n-k)!}(k-j-1)f^{k-1}(0) f^{(r+1)}(0).
\end{align*}
Now, we compute $L_0^2$. By shifting the summation index and applying the induction hypothesis we obtain:
\begin{align*}
L_0^2 &= \frac{1}{(k-j-1)!(n-k+j)!} \sum_{l=k-1}^{k+r-1} D_j^{(k+r-1-l+j)}(0)A_{k-j-1}^{(l-j)}(0)\\
&= \frac{1}{(k-j-1)!(n-k+j)!} \sum_{u=0}^r D_j^{(j+r-u)}(0)A_{k-j-1}^{(k-j-1 + u)}(0) \\
&= \frac{1}{(k-j-1)!(n-k+j)!} \sum_{u=0}^r D_j^{(j+r-u)}(0) (-1)^{u} f^{k-j+u}(0) \stirlingii{k-j+u}{k-j}(k-j-1)! \\
&=\frac{1}{(n-k+j)!} \sum_{l=0}^r  (-1)^{l} f^{k-j+l}(0) \stirlingii{k-j+l}{k-j}D_j^{(j+r-l)}(0).
\end{align*}

Now, we need to compute $D_j^{(j+r-l)}(0)$. Denote 
\begin{equation}\label{pi_definicija} 
\pi_{u_1, \dots, u_j} = (n-k+j)^{u_1} (n-k+j-1)^{u_2}\cdots (n-k+j - (j-1))^{u_j}, 
\end{equation}
where $\Pi_j$ is defined in the equation \eqref{Pi_j}.
By using the recurrent relation derived for $D_j^{(m)}$ and mathematical induction, we can get
\begin{align*}
D_j^{(j+r-l)}(0) &= \Pi_j \sum_{\substack{u_1, \dots u_j \geq 0 \\ u_1 + \cdots + u_j = r-l+1}}  \pi_{u_1, \dots, u_j}  f^{(u_1)}(0) \cdots f^{(u_j)} (0). 
\end{align*}

Assume for a moment that $l>0$, because we can apply induction hypothesis without discussion. For $l>0$ we have:
\begin{align*}
D_j^{(j+r-l)}(0) &= \Pi_j (-1)^{r-l+1} f^{r+j+1-l} (0) \sum_{\substack{u_1, \dots u_j \geq 0 \\ u_1 + \cdots + u_j = r-l+1}} \pi_{u_1, \dots, u_j}  \\
\end{align*}
For $l=0$ we have that 
\begin{align*}
D_j^{(j+r)}(0) &= \Pi_j \sum_{\substack{u_1, \dots u_j \geq 0 \\ u_1 + \cdots + u_j = r+1}}  \pi_{u_1, \dots, u_j}  f^{(u_1)}(0) \cdots f^{(u_j)} (0) \\
&= \Pi_jf^{j-1}(0) f^{(r+1)}(0) \eta + \Pi_j (-1)^{r+1} f^{r+j+1}(0)\sum_{\substack{u_1, \dots u_j \geq 0 \\ u_1 + \cdots + u_j = r+1 \\ (\forall t) u_t \neq r+1}}  \pi_{u_1, \dots, u_j},
\end{align*}
where
\[ \eta =  (n-k+j)^{r+1} + \cdots + (n-k+j-(j-1))^{r+1} . \]
Having this, we can calculate that
\begin{align*}
L_0^2 &= \frac{1}{(n-k+j)!} \sum_{l=0}^r (-1)^l f^{k-j+l}(0)\stirlingii{k-j+l}{k-j}D_j^{(j+r-l)}(0) \\
&=  \frac{1}{(n-k)!}(-1)^{r+1}f^{k+r+1}(0) \sum_{l=1}^r \stirlingii{k-j+l}{k-j} \sum_{\substack{u_1, \dots, u_j \geq 0 \\ u_1 + \cdots u_j = r-l+1}} \pi_{u_1,  \dots, u_j} \\
&+ \frac{1}{(n-k)!} f^{k-1}(0) f^{(r+1)}(0) \eta \\
&+ \frac{1}{(n-k)!} (-1)^{r+1} f^{k+r+1}(0) \sum_{\substack{u_1, \dots, u_j \geq 0 \\ u_1 + \cdots u_j = r+1 \\ (\forall t) u_t \neq r+1}} \pi_{u_1, \dots, u_j}. 
\end{align*}

At this point, we have calculated $L_0$. Now, let us calculate $L_+$. By shifting $l$ down by one we obtain:
\begin{align*}
L_+ &= \frac{1}{(k-j-1)!(n-k+j)!} \sum_{i=1}^{r+1} (-1)^i \binom{n-k+j}{i} \sum_{l=i+k-2}^{k+r-1} D_j^{(j+k+r-l-2)}(0) A_{k-j-1+i}^{(l-j+1)}(0).
\end{align*}
By the induction hypothesis we can expand $A_{k-j-1+i}^{(l-j+1)}(0)$ and then we have
\begin{align*}
L_+ &= \frac{1}{(k-j-1)!(n-k+j)!} \sum_{i=1}^{r+1} (-1)^i \binom{n-k+j}{i} \cdot \\
& \cdot \sum_{l=i+k-2}^{k+r-1}  (-1)^{l+2-k-i} f^{l-j+2} (0) \stirlingii{l-j+2}{k-j+i} (k-j-1+i)!  D_j^{(j+k+r-l-2)}(0) \\
&= \frac{1}{(k-j-1)!(n-k+j)!} \sum_{i=1}^{r+1} (k-j-1+i)! (-1)^i \binom{n-k+j}{i} \cdot \\
& \cdot \sum_{l=i+k-2}^{k+r-1}  (-1)^{l+2-k-i} f^{l-j+2} (0) \stirlingii{l-j+2}{k-j+i}  D_j^{(j+k+r-l-2)}(0).
\end{align*}
Now, we compute $D_j^{(j+k+r-l-2)}(0)$. We have that:
\begin{align*}
D_j^{(j+k+r-l-2)}(0) &= \Pi_j \sum_{\substack{u_1, \dots, u_j \geq 0 \\ u_1 + \cdots u_j = k+r-l-1}} \pi_{u_1, \dots, u_j} f^{(u_1)}(0) \cdots f^{(u_j)}(0) \\
&= \Pi_j (-1)^{k+r-l-1}f^{j+k+r-l-1}(0) \sum_{\substack{u_1, \dots, u_j \geq 0 \\ u_1 + \cdots u_j = k+r-l-1}} \pi_{u_1, \dots, u_j}.
\end{align*}
Substituting this back into the upper expression for $L_+$ we obtain:
\begin{align*}
L_+ &= \frac{1}{(k-j-1)!(n-k)!} (-1)^{r+1}f^{k+r+1}(0)  \sum_{i=1}^{r+1} (k-j-1+i)! \binom{n-k+j}{i} \cdot \\
& \cdot \sum_{l=i+k-2}^{k+r-1} \stirlingii{l-j+2}{k-j+i}\sum_{\substack{u_1, \dots, u_j \geq 0 \\ u_1 + \cdots u_j = k+r-l-1}} \pi_{u_1, \dots, u_j}.
\end{align*}

Similar discussion is to be done for $R_0 + R_+$. 
{
Here, extra care is
needed: in the sum over $l$, the summand $l=k-1$ contains the factor
$f^{(k+r-l)}(0)=f^{(r+1)}(0)$, which is the derivative that has to be
determined and therefore must not be replaced by the induction
hypothesis. We isolate it first. Using $A_{k-2}^{(k-2)}(0)=(k-2)!f^{k-1}(0)$
from \eqref{A_m_m}, the term for $l=k-1$ equals
\[\frac{k-1}{(k-1)!(n-k)!}\binom{k+r}{k-1}(k-2)!\,f^{(r+1)}(0)f^{k-1}(0)
=\frac{1}{(n-k)!}\binom{k+r}{k-1}f^{(r+1)}(0)f^{k-1}(0).\]
For $k\le l\le k+r-1$ the induction hypothesis applies to both factors,
and the term $l=k+r$ is expanded as before. Hence
\begin{align*}
R_0 &= \frac{k-1}{(k-1)!(n-k)!} \sum_{l=k-1}^{k+r} \binom{k+r}{l} f^{(k+r-l)}(0) A_{k-2}^{(l-1)}(0) \\
&= \frac{1}{(n-k)!}\binom{k+r}{k-1} f^{(r+1)}(0)f^{k-1}(0) \\
&\quad + \frac{1}{(n-k)!}(-1)^{r+1} f^{k+r+1}(0) \sum_{l=k}^{k+r-1} \binom{k+r}{l} \stirlingii{l}{k-1} \\
&\quad + \frac{(k+r-1)!}{(r+1)!\,(k-2)!\,(n-k)!} f^{k-1}(0) f^{(r+1)}(0) \\
&\quad + \frac{(k+r-1)!\,(k-2)}{(r+2)!\,(k-2)!\,(n-k)!} f^{k-1}(0) f^{(r+1)}(0) \\
&\quad + \frac{1}{(k-2)!(n-k)!} (-1)^{r+1} f^{k+r+1}(0)
\sum_{\substack{1 \le l_1, \dots, l_{k-2} \le r+1 \\ 0 \le l_{k-1} \le r \\ l_1 + \cdots + l_{k-1} = k+r-1}}
\frac{(k+r-1)!}{l_1! \cdots l_{k-1}!}.
\end{align*}
}

Now, let us calculate $R_+$. By applying the induction hypothesis we obtain
\begin{align*}
R_+ &= \frac{1}{(k-1)!(n-k)!} \sum_{i=1}^{r+1} (-1)^i \binom{n-k}{i} (k-1+i) \cdot \\
&  \cdot  \sum_{l=i+k-1}^{k+r} \binom{k+r}{l} (-1)^{k+r-l} f^{k+r-l+1}(0) (-1)^{l-i-k +1} f^{l}(0) \stirlingii{l}{i+k-1} (i+k-2)! \\
&= \frac{1}{(k-1)!(n-k)!}(-1)^{r+1} f^{k+r+1}(0) \sum_{i=1}^{r+1} (i+k-1)! \binom{n-k}{i} \sum_{l=i+k-1}^{k+r} \binom{k+r}{l} \stirlingii{l}{i+k-1}.
\end{align*}

{
At this point we collect, on both sides of $L_0^1+L_0^2+L_+=R_0+R_+$, the
terms containing $f^{(r+1)}(0)f^{k-1}(0)$ and those containing
$(-1)^{r+1}f^{k+r+1}(0)$, and divide by $f^{k-1}(0)>0$. This yields the expression of the form
\[
C_1 f^{(r+1)}(0) = C_2 (-1)^{r+1} f^{r+2}(0),
\]
where, with $\eta=\sum_{u=1}^{j}(n-k+u)^{r+1}$ and the convention
$\binom{N}{M}=0$ for $M>N$,
\begin{align*}
(n-k)!\,C_1 &=
\binom{k-j+r}{k-j-1} + \binom{k-j+r}{k-j-2} + \eta
- \binom{k+r-1}{k-2} - \binom{k+r-1}{k-3} - \binom{k+r}{r+1},
\end{align*}
and $(n-k)!\,C_2$ equals the right-hand side displayed below:
\begin{align*}
(n-k)!\,C_2 &=
\frac{1}{(k-2)!} \sum_{\substack{1 \le l_1, \dots, l_{k-2} \le r+1 \\ 0 \le l_{k-1} \le r \\ l_1 + \cdots + l_{k-1} = k+r-1}} \frac{(k+r-1)!}{l_1! \cdots l_{k-1}!} \\
&\quad + \sum_{l=k}^{k+r-1} \binom{k+r}{l} \stirlingii{l}{k-1} \\
&\quad + \frac{1}{(k-1)!} \sum_{i=1}^{r+1} (i+k-1)! \binom{n-k}{i} \sum_{l=i+k-1}^{k+r} \binom{k+r}{l} \stirlingii{l}{i+k-1} \\
&\quad - \frac{1}{(k-j-1)!} \sum_{i=1}^{r+1} (k-j-1+i)! \binom{n-k+j}{i}
\sum_{l=i+k-2}^{k+r-1} \stirlingii{l-j+2}{k-j+i}
\sum_{\substack{u_1, \dots, u_j \ge 0 \\ u_1 + \cdots + u_j = k+r-l-1}} \pi_{u_1, \dots, u_j} \\
&\quad - \sum_{l=1}^r \stirlingii{k-j+l}{k-j}
\sum_{\substack{u_1, \dots, u_j \ge 0 \\ u_1 + \cdots + u_j = r-l+1}} \pi_{u_1, \dots, u_j} - \sum_{\substack{u_1, \dots, u_j \geq 0 \\ u_1 + \cdots u_j = r+1 \\ (\forall t) u_t \neq r+1}} \pi_{u_1, \dots, u_j}
\\
&- \frac{1}{(k-j-1)!} \sum_{\substack{1 \le l_1, \dots, l_{k-j-1} \le r+1 \\ 0 \le l_{k-j} \le r \\ l_1 + \cdots + l_{k-j} = k-j+r}}
\frac{(k-j+r)!}{l_1! \cdots l_{k-j}!}.
\end{align*}
}

{The condition $n-k > k-j$, i.e. $n+j > 2k$ ensures that $C_1>0$ for any $r \geq 0$ (Lemma \ref{lem:C1_positive}) so it can eventually be divided with.} If we prove that $C_1 = C_2$, then we have successfully proven the induction step. We will write $C_1 = C_2$ and show that it is equivalent to the expression that is known to be true. Since
\[ \sum_{l=k}^{k+r-1}  \binom{k+r}{l} \stirlingii{l}{k-1} + \binom{k+r}{k-1} = \sum_{l=k-1}^{k+r-1}  \binom{k+r}{l} \stirlingii{l}{k-1}, \]
 the equality $C_1 = C_2$ can be written as
\begin{align*}
&\binom{k-j+r}{k-j-1} + \binom{k-j+r}{k-j-2}  + \eta - \binom{k+r-1}{k-2} - \binom{k+r-1}{k-3}\\
&= \frac{1}{(k-2)!} \sum_{\substack{1 \leq l_1, \dots, l_{k-2} \leq r+1 \\ 0 \leq l_{k-1} \leq r \\ l_1 + \cdots + l_{k-1} = k+r-1}} \frac{(k+r-1)!}{l_1! \cdots l_{k-1}!} \\
& +  \sum_{l=k-1}^{k+r-1}  \binom{k+r}{l} \stirlingii{l}{k-1} \\
& + \frac{1}{(k-1)!} \sum_{i=1}^{r+1} (i+k-1)! \binom{n-k}{i} \sum_{l=i+k-1}^{k+r} \binom{k+r}{l} \stirlingii{l}{i+k-1} \\
& - \frac{1}{(k-j-1)!}  \sum_{i=1}^{r+1} (k-j-1+i)! \binom{n-k+j}{i} \sum_{l=i+k-2}^{k+r-1} \stirlingii{l-j+2}{k-j+i}\sum_{\substack{u_1, \dots, u_j \geq 0 \\ u_1 + \cdots u_j = k+r-l-1}} \pi_{u_1, \dots, u_j} \\
&-  \sum_{l=1}^r \stirlingii{k-j+l}{k-j} \sum_{\substack{u_1, \dots, u_j \geq 0 \\ u_1 + \cdots u_j = r-l+1}} \pi_{u_1,  \dots, u_j}  -  \sum_{\substack{u_1, \dots, u_j \geq 0 \\ u_1 + \cdots u_j = r+1 \\ (\forall t) u_t \neq r+1}} \pi_{u_1, \dots, u_j}  \\
& - \frac{1}{(k-j-1)!}\sum_{\substack{1 \leq l_1, \dots l_{k-j -1 } \leq r+1, \\ 0 \leq l_{k-j} \leq r \\ l_1 + \cdots + l_{k-j} = k-j+r}}  \frac{(k-j+r)!}{l_1! \cdots l_{k-j}!}.
\end{align*}
After rearranging the terms, we get
\begin{align*}
&\binom{k-j+r}{k-j-1} + \binom{k-j+r}{k-j-2} + \frac{1}{(k-j-1)!}\sum_{\substack{1 \leq l_1, \dots l_{k-j -1 } \leq r+1, \\ 0 \leq l_{k-j} \leq r \\ l_1 + \cdots + l_{k-j} = k-j+r}}  \frac{(k-j+r)!}{l_1! \cdots l_{k-j}!}\\
&  + \sum_{\substack{u_1, \dots, u_j \geq 0 \\ u_1 + \cdots u_j = r+1 }} \pi_{u_1, \dots, u_j}  \\
&=  \binom{k+r-1}{k-2} + \binom{k+r-1}{k-3} + \frac{1}{(k-2)!} \sum_{\substack{1 \leq l_1, \dots, l_{k-2} \leq r+1 \\ 0 \leq l_{k-1} \leq r \\ l_1 + \cdots + l_{k-1} = k+r-1}} \frac{(k+r-1)!}{l_1! \cdots l_{k-1}!} \\
& +  \sum_{l=k-1}^{k+r-1}  \binom{k+r}{l} \stirlingii{l}{k-1} \\
& + \frac{1}{(k-1)!} \sum_{i=1}^{r+1} (i+k-1)! \binom{n-k}{i} \sum_{l=i+k-1}^{k+r} \binom{k+r}{l} \stirlingii{l}{i+k-1} \\
& - \frac{1}{(k-j-1)!}  \sum_{i=1}^{r+1} (k-j-1+i)! \binom{n-k+j}{i} \sum_{l=i+k-2}^{k+r-1} \stirlingii{l-j+2}{k-j+i}\sum_{\substack{u_1, \dots, u_j \geq 0 \\ u_1 + \cdots u_j = k+r-l-1}} \pi_{u_1, \dots, u_j} \\
&-  \sum_{l=1}^r \stirlingii{k-j+l}{k-j} \sum_{\substack{u_1, \dots, u_j \geq 0 \\ u_1 + \cdots u_j = r-l+1}} \pi_{u_1,  \dots, u_j} .
\end{align*}
First binomial coefficient on the left-hand side, as well as on the right, will be combined with adequate sum to make the last index take value $r+1$. The second one will also be combined with adequate sum, because if last index is equal to zero, one of the others can take value $r+2$. Doing that, we obtain
\begin{align*}
&  \frac{1}{(k-j-1)!} \left( \sum_{\substack{1 \leq l_1, \dots l_{k-j} \\ l_1 + \cdots + l_{k-j} = k-j+r}}  \frac{(k-j+r)!}{l_1! \cdots l_{k-j}!} + \sum_{\substack{1 \leq l_1, \dots l_{k-j-1} \\ l_1 + \cdots + l_{k-j-1} = k-j+r}}  \frac{(k-j+r)!}{l_1! \cdots l_{k-j-1}!} \right) \\
&  + \sum_{\substack{u_1, \dots, u_j \geq 0 \\ u_1 + \cdots u_j = r+1 }} \pi_{u_1, \dots, u_j}  \\
&=   \frac{1}{(k-2)!} \left( \sum_{\substack{1 \leq l_1, \dots, l_{k-1} \\  l_1 + \cdots + l_{k-1} = k+r-1}} \frac{(k+r-1)!}{l_1! \cdots l_{k-1}!} + \sum_{\substack{1 \leq l_1, \dots, l_{k-2} \\  l_1 + \cdots + l_{k-2} = k+r-1}} \frac{(k+r-1)!}{l_1! \cdots l_{k-2}!}   \right) \\
& +  \sum_{l=k-1}^{k+r-1}  \binom{k+r}{l} \stirlingii{l}{k-1} \\
& + \frac{1}{(k-1)!} \sum_{i=1}^{r+1} (i+k-1)! \binom{n-k}{i} \sum_{l=i+k-1}^{k+r} \binom{k+r}{l} \stirlingii{l}{i+k-1} \\
& - \frac{1}{(k-j-1)!}  \sum_{i=1}^{r+1} (k-j-1+i)! \binom{n-k+j}{i} \sum_{l=i+k-2}^{k+r-1} \stirlingii{l-j+2}{k-j+i}\sum_{\substack{u_1, \dots, u_j \geq 0 \\ u_1 + \cdots u_j = k+r-l-1}} \pi_{u_1, \dots, u_j} \\
&-  \sum_{l=1}^r \stirlingii{k-j+l}{k-j} \sum_{\substack{u_1, \dots, u_j \geq 0 \\ u_1 + \cdots u_j = r-l+1}} \pi_{u_1,  \dots, u_j},
\end{align*}
which, after using relation \eqref{stirling_multinom} between multinomial coefficients and Stirling numbers becomes:
\begin{align*}
& \frac{1}{(k-j-1)!} \left( \stirlingii{k-j+r}{k-j}(k-j)! + \stirlingii{k-j+r}{k-j-1}(k-j-1)! \right) \\
&  + \sum_{\substack{u_1, \dots, u_j \geq 0 \\ u_1 + \cdots u_j = r+1 }} \pi_{u_1, \dots, u_j}  \\
&=    \frac{1}{(k-2)!} \left( \stirlingii{k+r-1}{k-1}(k-1)! + \stirlingii{k+r-1}{k-2}(k-2)!   \right) \\
& +  \sum_{l=k-1}^{k+r-1}  \binom{k+r}{l} \stirlingii{l}{k-1} \\
& + \frac{1}{(k-1)!} \sum_{i=1}^{r+1} (i+k-1)! \binom{n-k}{i} \sum_{l=i+k-1}^{k+r} \binom{k+r}{l} \stirlingii{l}{i+k-1} \\
& - \frac{1}{(k-j-1)!}  \sum_{i=1}^{r+1} (k-j-1+i)! \binom{n-k+j}{i} \sum_{l=i+k-2}^{k+r-1} \stirlingii{l-j+2}{k-j+i}\sum_{\substack{u_1, \dots, u_j \geq 0 \\ u_1 + \cdots u_j = k+r-l-1}} \pi_{u_1, \dots, u_j} \\
&-  \sum_{l=1}^r \stirlingii{k-j+l}{k-j} \sum_{\substack{u_1, \dots, u_j \geq 0 \\ u_1 + \cdots u_j = r-l+1}} \pi_{u_1,  \dots, u_j},
\end{align*}
and, after applying Eq. \eqref{stirling_1}:
\begin{align*}
&  \stirlingii{k-j+r+1}{k-j}   + \sum_{\substack{u_1, \dots, u_j \geq 0 \\ u_1 + \cdots u_j = r+1 }} \pi_{u_1, \dots, u_j}  \\
&=     \stirlingii{k+r}{k-1} +  \sum_{l=k-1}^{k+r-1}  \binom{k+r}{l} \stirlingii{l}{k-1} \\
& + \frac{1}{(k-1)!} \sum_{i=1}^{r+1} (i+k-1)! \binom{n-k}{i} \sum_{l=i+k-1}^{k+r} \binom{k+r}{l} \stirlingii{l}{i+k-1} \\
& - \frac{1}{(k-j-1)!}  \sum_{i=1}^{r+1} (k-j-1+i)! \binom{n-k+j}{i} \sum_{l=i+k-2}^{k+r-1} \stirlingii{l-j+2}{k-j+i}\sum_{\substack{u_1, \dots, u_j \geq 0 \\ u_1 + \cdots u_j = k+r-l-1}} \pi_{u_1, \dots, u_j} \\
&-  \sum_{l=1}^r \stirlingii{k-j+l}{k-j} \sum_{\substack{u_1, \dots, u_j \geq 0 \\ u_1 + \cdots u_j = r-l+1}} \pi_{u_1,  \dots, u_j}.
\end{align*}
After combining the first and the second line on the right-hand side, we have:
\begin{align*}
& \stirlingii{k-j+r+1}{k-j}   + \sum_{\substack{u_1, \dots, u_j \geq 0 \\ u_1 + \cdots u_j = r+1 }} \pi_{u_1, \dots, u_j}  \\
&=    \frac{1}{(k-1)!} \sum_{i=0}^{r+1} (i+k-1)! \binom{n-k}{i} \sum_{l=i+k-1}^{k+r} \binom{k+r}{l} \stirlingii{l}{i+k-1} \\
& - \frac{1}{(k-j-1)!}  \sum_{i=1}^{r+1} (k-j-1+i)! \binom{n-k+j}{i} \sum_{l=i+k-2}^{k+r-1} \stirlingii{l-j+2}{k-j+i}\sum_{\substack{u_1, \dots, u_j \geq 0 \\ u_1 + \cdots u_j = k+r-l-1}} \pi_{u_1, \dots, u_j} \\
&-  \sum_{l=1}^r \stirlingii{k-j+l}{k-j} \sum_{\substack{u_1, \dots, u_j \geq 0 \\ u_1 + \cdots u_j = r-l+1}} \pi_{u_1,  \dots, u_j},
\end{align*}
Now we combine the last term on the left-hand side with the last term on the right-hand side and obtain:
\begin{align*}
& \stirlingii{k-j+r+1}{k-j}     \\
&=    \frac{1}{(k-1)!} \sum_{i=0}^{r+1} (i+k-1)! \binom{n-k}{i} \sum_{l=i+k-1}^{k+r} \binom{k+r}{l} \stirlingii{l}{i+k-1} \\
& - \frac{1}{(k-j-1)!}  \sum_{i=1}^{r+1} (k-j-1+i)! \binom{n-k+j}{i} \sum_{l=i+k-2}^{k+r-1} \stirlingii{l-j+2}{k-j+i}\sum_{\substack{u_1, \dots, u_j \geq 0 \\ u_1 + \cdots u_j = k+r-l-1}} \pi_{u_1, \dots, u_j} \\
&-  \sum_{l=0}^r \stirlingii{k-j+l}{k-j} \sum_{\substack{u_1, \dots, u_j \geq 0 \\ u_1 + \cdots u_j = r-l+1}} \pi_{u_1,  \dots, u_j},
\end{align*}
After moving $l$ up for $k-2$ and moving $\stirlingii{k-j+r+1}{k-j}$ to the right-hand side, our equality becomes
\begin{align*}
0 &=  \frac{1}{(k-1)!} \sum_{i=0}^{r+1} (i+k-1)! \binom{n-k}{i} \sum_{l=i+k-1}^{k+r} \binom{k+r}{l} \stirlingii{l}{i+k-1} \\
& - \frac{1}{(k-j-1)!}  \sum_{i=1}^{r+1} (k-j-1+i)! \binom{n-k+j}{i} \sum_{l=i+k-2}^{k+r-1} \stirlingii{l-j+2}{k-j+i}\sum_{\substack{u_1, \dots, u_j \geq 0 \\ u_1 + \cdots u_j = k+r-l-1}} \pi_{u_1, \dots, u_j} \\
&- \stirlingii{k-j+r+1}{k-j} - \sum_{l=k-2}^{r+k-2} \stirlingii{l-j+2}{k-j} \sum_{\substack{u_1, \dots, u_j \geq 0 \\ u_1 + \cdots u_j = k+r-l-1}} \pi_{u_1,  \dots, u_j}.
\end{align*}
Last two lines on the right-hand side can be combined into one sum. After doing that and moving $l$ in the last row up by one to match the row above it, then moving all binomial coefficients from the right side to the left, we get the following equality:
\begin{align*}
&\frac{1}{(k-1)!} \sum_{i=0}^{r+1} (i+k-1)! \binom{n-k}{i} \sum_{l=i+k-1}^{k+r} \binom{k+r}{l} \stirlingii{l}{i+k-1} \\
&= \frac{1}{(k-j-1)!}  \sum_{i=0}^{r+1} (k-j-1+i)! \binom{n-k+j}{i} \sum_{l=i+k-2}^{k+r-1} \stirlingii{l-j+2}{k-j+i}\sum_{\substack{u_1, \dots, u_j \geq 0 \\ u_1 + \cdots u_j = k+r-l-1}} \pi_{u_1, \dots, u_j}.
\end{align*}
By the definition of Stirling numbers of the second kind, it holds that $\stirlingii{a}{b} = 0$ for $a < b$. Here, it means that 
\[  \sum_{l=i+k-1}^{k+r} \binom{k+r}{l} \stirlingii{l}{i+k-1} =  \sum_{l=0}^{k+r} \binom{k+r}{l} \stirlingii{l}{i+k-1} = \stirlingii{k+r+1}{i+k}, \]
where we used the Eq. \eqref{stirling_2}. 
Now, our equality becomes:
\begin{align*}
& \frac{1}{(k-1)!} \sum_{i=0}^{r+1} (i+k-1)! \binom{n-k}{i} \stirlingii{k+r+1}{i+k} \\
&= \frac{1}{(k-j-1)!}  \sum_{i=0}^{r+1} (k-j-1+i)! \binom{n-k+j}{i} \sum_{l=i+k-2}^{k+r-1} \stirlingii{l-j+2}{k-j+i}\sum_{\substack{u_1, \dots, u_j \geq 0 \\ u_1 + \cdots u_j = k+r-l-1}} \pi_{u_1, \dots, u_j}.
\end{align*}
Directly from Lemma \ref{lema_induktivna_suma} we can conclude that
\begin{align*}
    \sum_{\substack{u_1, \dots, u_j \geq 0 \\ u_1 + \cdots u_j = k+r-l-1}} \pi_{u_1, \dots, u_j} &= \sum_{\substack{u_1, \dots, u_j \geq 0 \\ u_1 + \cdots u_j = k+r-l-1}} (n-k+j)^{u_1} (n-k+j-1)^{u_2}\cdots (n-k+j - (j-1))^{u_j} \\
    &= \sum_{s=0}^{k+r-l-1} \binom{n-k}{s} \frac{(s+j-1)!}{(j-1)!} \stirlingii{j+k+r-l-1}{s+j},
\end{align*}
which means that what is left to be proven is:
\begin{align*}
        &\frac{1}{(k-1)!}\sum_{i=0}^{r+1} (i+k-1)! \binom{n-k}{i} \stirlingii{k+r+1}{i+k} \nonumber \\
        & \qquad = \frac{1}{(k-j-1)!} \sum_{i=0}^{r+1} (k-j-1+i)! \binom{n-k+j}{i} \sum_{l=i+k-2}^{k+r-1} \stirlingii{l-j+2}{k-j+i} \nonumber \\
        & \qquad \qquad \qquad \qquad \qquad \times \sum_{s=0}^{k+r-l-1} \binom{n-k}{s} \frac{(s+j-1)!}{(j-1)!} \stirlingii{j+k+r-l-1}{s+j}.
    \end{align*}
After noting that in the second sum on the right-hand side $l$ can start either from $k-2$ or $i+k-2$, this becomes exactly the statement of Lemma \ref{lema_orig_teska}, and hence the proof of Theorem \ref{main_theorem} is completed.
\end{proof}

{
\begin{remark}
We note that the condition $k < (n+j)/2$ is sufficient to guarantee $C_1 \neq 0$, where $C_1$ is the constant from the proof. The theorem still holds under the weaker condition that  $C_1 \neq 0$ for each $r=0,1,\ldots$
\end{remark}
}


\section{Conclusion and outlook}\label{sec:conclusion}

In this final section, we discuss a potential application to goodness-of-fit testing. In particular, this class of characterizations belongs to the group of so-called equidistribution-type characterizations which, due to the variety of ways of expressing equality in distribution of two random variables, has proven very fruitful for the construction of goodness-of-fit tests (see \cite{nikitin2017tests} for more details). For more recent applications, we refer to, e.g., \cite{cuparic2022new}. 

Characterizations stemming from \eqref{identitet} have already been exploited in this context. In particular, the U-empirical and V-empirical distribution function approach has been used to construct test statistics from the following distributional characterizations in the aforementioned references:

\begin{itemize}
    \item $X_{(2;2)}+\frac13X_{3}\overset{d}{=} X_{(3;3)}$ \cite{volkova2015goodness};
    \item $\sum_{i=1}^n \frac{1}{i}X_i\overset{d}{=} X_{(n;n)}$ \cite{jovanovic2015tests};
    \item $X_{(2;3)}+X_0\overset{d}{=} X_{(3;3)}$ \cite{milovsevic2016asymptotic};	 
    \item $X_{(1;2)}+X_0\overset{d}{=} X_{(2;2)}$ \cite{milovsevic2016some2};
    \item $X_{(n-1;n-1)}+\frac1nX_{n}\overset{d}{=} X_{(n;n)}$ and $X_{(n-1;n)}+X_{0}\overset{d}{=} X_{(n;n)}$ \cite{milovsevic2017some}.
\end{itemize}

The corresponding tests have been shown to be scale-free under the null hypothesis of exponentiality, which makes them suitable for testing even when the scale parameter is unknown. Moreover, they have rather competitive powers. Therefore, one potential application of the presented characterization lies in that direction, possibly by also exploring other integral transforms, such as U-empirical and V-empirical Laplace, Hankel, and characteristic function transforms, among others.


\bibliography{literatura}

\end{document}